\documentclass[12pt,hyp,]{amsart}

\usepackage{rotating}
\usepackage{mathrsfs}  
\usepackage{tikz}
\usepackage{tikz-cd}
\usetikzlibrary{knots}
\usepackage{subfig}
\usepackage{adjustbox}
\usepackage{cite}
\usepackage[colorlinks, linkcolor=blue, allcolors=blue]{hyperref}
\hypersetup{nesting=true,debug=true,naturalnames=true}
\usepackage[margin=1in]{geometry}

\usetikzlibrary{decorations.markings}

\newcommand{\Z}{{\mathbb{Z}}}

\theoremstyle{definition}
\newtheorem{definition}{Definition}[section]
\newtheorem{lemma}[definition]{Lemma}

\newtheorem{corollary}[definition]{Corollary}
\newtheorem{remark}[definition]{Remark}
\newtheorem{theorem}[definition]{Theorem}
\newtheorem{conjecture}[definition]{Conjecture}

\newtheorem{example}[definition]{Example}

\title{Odd order group actions on alternating knots} 

\author{Keegan Boyle}  

\email{kboyle@uoregon.edu}

\setcounter{section}{0}

\begin{document}
\begin{abstract}
Let $K$ be a an alternating prime knot in $S^3$. We investigate the category of flypes between reduced alternating diagrams for $K$. As a consequence, we show that any odd prime order action on $K$ is isotopic through maps of pairs to a single flype. This implies that for any odd prime order action on $K$ there is either a reduced alternating periodic diagram or a reduced alternating free periodic diagram. Finally, we deduce that the quotient of an odd periodic alternating knot is also alternating.
\end{abstract}
\maketitle
\section{Introduction}
A diagram $D$ for a knot $K \subset S^3$ is \emph{alternating} if the crossings in $D$ alternate between under and over crossings as you follow $K$ around the diagram, and a knot $K$ is \emph{alternating} if it has an alternating diagram. See Figure \ref{fig:4} for an example. Recently, Greene \cite{G} and Howie \cite{H} each showed that an alternating knot can instead be characterized by the existence of certain spanning surfaces. In light of this more geometric interpretation, it is interesting to consider how the property of being alternating interacts with finite order group actions on $K$. Specifically, we have the following conjecture.
\begin{conjecture}
\label{conj:main}
The quotient of an alternating periodic knot $K \subset S^3$ is alternating. 
\end{conjecture}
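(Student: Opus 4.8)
The plan is to attack the conjecture in the tractable case of odd prime period, where the flyping calculus of reduced alternating diagrams can be brought to bear. Let $\tau\colon (S^3,K)\to(S^3,K)$ be a periodic symmetry of odd prime order $p$, so that by the resolution of the Smith conjecture its fixed-point set is an unknotted circle $A$ (the axis) disjoint from $K$, and the quotient $\bar{K}=K/\tau \subset S^3/\tau \cong S^3$ is again a knot in $S^3$. Since $K$ is alternating it admits a reduced alternating diagram $D$, and I would begin by recording that $\tau(D)$ is again a reduced alternating diagram of $K$; by the Tait flyping theorem of Menasco--Thistlethwaite, $D$ and $\tau(D)$ are connected by a finite sequence of flypes.

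The heart of the argument is to promote this combinatorial relationship into a genuinely equivariant picture. First I would organize the reduced alternating diagrams of $K$ and the flypes between them into a category (or graph) on which $\tau$ acts, the action avoiding the order-two phenomena that spoil the even case precisely because $p$ is odd. The goal is to locate a $\tau$-invariant object: a reduced alternating diagram on which $\tau$ is realized, after an isotopy through maps of pairs, as a rotational symmetry of the diagram --- equivalently, to show the symmetry is isotopic to a single flype, from which one extracts either a reduced alternating periodic diagram or a reduced alternating free periodic diagram. The oddness of $p$ is what lets one average, or otherwise find a fixed point of the $\tau$-action on the flype category; a naive fixed-point count fails for even order because a flype is essentially an order-two move, and a $\tau$-orbit of even length could then be swapped by a flype with no invariant diagram in sight.

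With an equivariant reduced alternating diagram in hand, the conclusion is comparatively formal. Choosing the projection so that the axis $A$ is perpendicular to the projection sphere, $\tau$ acts as rotation by $2\pi/p$ of $S^2$ fixing the two points where $A$ meets the sphere, and the diagram of $K$ is invariant under this rotation. Quotienting $S^2$ by the rotation again yields a sphere, and the image of the invariant diagram is a diagram of $\bar K$. Because $\tau$ preserves the orientation of $S^3$ and commutes with the orientation of $K$, the over/under pattern along $K$ descends to $\bar K$, so the quotient diagram still alternates; hence $\bar K$ is alternating.

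The main obstacle is the middle step: controlling the $\tau$-action on the flype category well enough to guarantee an invariant diagram, equivalently to reduce the Menasco--Thistlethwaite sequence relating $D$ and $\tau(D)$ to a single flype. This requires a careful structural analysis of how flypes compose and interact with a cyclic symmetry, and it is exactly here that the hypothesis of odd order is indispensable; the even-order and composite-order cases --- and thus the full strength of the conjecture --- appear to demand genuinely new ideas and remain out of reach of this approach.
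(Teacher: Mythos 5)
Your proposal follows essentially the same route as the paper: realize the odd prime order symmetry as a composition of flypes via Menasco--Thistlethwaite, analyze how flypes compose and commute (with oddness of $p$ ruling out the order-two ``flip-back'' phenomena) to reduce the action to a single flype and hence to a periodic or free-periodic reduced alternating diagram, and then observe that alternation descends to the quotient of an invariant diagram. Like the paper, this establishes the conjecture only for odd (prime) periods --- the paper itself proves exactly this partial case and leaves the even case open --- so both the strategy and its acknowledged limitations coincide with the paper's.
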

To approach this conjecture, we use a theorem of Menasco and Thistlethwaite \cite{MT} that any homeomorphism $f:(S^3,K) \to (S^3,K)$, can be realized up to isotopy through maps of pairs by a sequence of certain diagrammatic moves called flypes. In particular, we study the category of flypes on an alternating knot, and as a consequence classify odd prime order group actions on alternating knots. Specifically, we prove the following main result. 
\begin{theorem}
\label{thm:single}
Let $\tau$ be an odd prime order $p$ action on a prime alternating knot $K$. Then there exists a reduced alternating diagram $D$ for $K$, and a flype $f$ from $D$ to $D$ such that $\tau$ is isotopic through maps of pairs $(S^3,K)$ to $f$. 
\end{theorem}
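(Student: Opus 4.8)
The plan is to convert the topological symmetry $\tau$ into combinatorial data via flypes, and then exploit the oddness of $p$ to collapse a long flype sequence down to a single flype. First I would invoke the Menasco--Thistlethwaite theorem quoted in the introduction: since $\tau$ is a homeomorphism of $(S^3,K)$, it is isotopic through maps of pairs to a finite sequence of flypes carrying a fixed reduced alternating diagram $D_0$ to another reduced alternating diagram. Identifying source and target, this presents $\tau$ as an endomorphism of $D_0$ in the flype category set up in the preceding sections, and the relation $\tau^p \simeq \mathrm{id}$ records that the $p$-fold composite is isotopic to the identity. The remaining work is to normalize this morphism using the group structure of $\langle \tau \rangle \cong \Z/p$.

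The second step is to let $\Z/p$ act on the finite combinatorial skeleton underlying the flype category: the decomposition of $D_0$ into maximal $2$--string tangles organized into flyping circuits, together with the cyclic data recording how the crossings are distributed along each circuit. Because $p$ is an odd prime, every orbit of this action on circuits and on tangles has length $1$ or $p$; in particular $\tau$ can never merely transpose two pieces of the diagram. I would use this to isolate a $\tau$--invariant flyping circuit (or invariant tangle) to host the eventual flype. The absence of two--element orbits is precisely what allows the two ends of a candidate flype axis to be matched up equivariantly — a period--$2$ symmetry is exactly the configuration that would obstruct this, which is the structural reason the hypothesis is odd order.

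The heart of the argument, and the step I expect to be the main obstacle, is upgrading invariance of this skeleton into an honest $\tau$--invariant reduced alternating diagram $D$ on which $\tau$ is realized by a \emph{single} flype rather than a composition. I would run a fixed-point argument for the $\Z/p$--action on the (tree-like, non-positively curved) complex whose vertices are reduced alternating diagrams and whose edges are single flypes: since $\gcd(p,2)=1$, the action should fix either a vertex or a single edge, and oddness rules out the inverted-edge scenario that would force a genuine period--$2$ flype. Concretely this amounts to ``recentering'' the distribution of crossings along each invariant circuit so that the cyclic flype data becomes compatible with one flype move. The delicate point is that flypes in distinct circuits interact, so a naive averaging need not terminate at a reduced alternating diagram; controlling these interactions while staying within reduced alternating diagrams is where the combinatorics of the flype category must be used carefully.

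Finally, having produced $D$ together with a flype $f$ satisfying $f(D)=D$, I would confirm that $f$ and $\tau$ represent the same element of the mapping class group of $(S^3,K)$, i.e.\ are isotopic through maps of pairs, by transporting the identification $\tau(D_0)\cong D_0$ along the original flype sequence and checking it agrees with the self-flype $f$. The position of the flype axis relative to the projection sphere is then what will distinguish, downstream, the rotationally periodic and free periodic alternatives, but for the present statement it suffices to exhibit the single flype $f$.
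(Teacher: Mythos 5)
Your first step (invoking the Menasco--Thistlethwaite theorem to present $\tau$ as a composition of flypes from a fixed reduced alternating diagram to itself) matches the paper, and your observation that primality forbids orbits of length $2$ is indeed the point where oddness enters the paper's argument as well. But the heart of your proposal --- a fixed-point argument for the $\Z/p$-action on a ``tree-like, non-positively curved'' complex whose vertices are reduced alternating diagrams and whose edges are flypes --- is a genuine gap, in two ways. First, no such non-positive curvature or fixed-point property is established (or cited) for the flype complex, and you give no argument for it. Second, and more fundamentally, the framing is wrong: by the flyping theorem itself, \emph{all} reduced alternating diagrams of $K$ are connected by flypes, and $\tau$ carries any diagram to an isomorphic labeled graph, so $\tau$ acts trivially on the vertices of your complex. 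Every vertex is ``fixed,'' and a fixed vertex tells you nothing --- in particular it does not distinguish the periodic case (where $\tau$ is a diagram symmetry) from the free periodic case (where no diagram is invariant and $\tau$ is a genuinely non-trivial self-flype, i.e.\ a \emph{loop} at a vertex, not an edge). The theorem is a statement about the \emph{morphism} representing $\tau$ in the flype category --- that the composition $f_n\circ\cdots\circ f_1$ can be rewritten as a single flype --- and this cannot be extracted from the action on objects.

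Two further missing ingredients are worth naming. To get a $\Z/p$-action on any combinatorial skeleton at all, you need to know that $\tau^p$, which is only \emph{isotopic} to the identity, actually acts as the identity on the crossings; the paper proves this (Lemma \ref{lemma:permutation}) using that alternating knots are not satellites, hence torus or hyperbolic, and Mostow rigidity --- your proposal assumes it silently. And the interaction problem you flag at the end (``flypes in distinct circuits interact, so a naive averaging need not terminate'') is precisely the crux, which the paper resolves concretely: after using the commuting and combining lemmas (Lemmas \ref{lemma:commute} and \ref{lemma:combine}) to arrange that each flype's created crossing is destroyed only by later iterates of the same flype, oddness rules out the ``flip back'' configuration, so each orbit of flypes forms a loop rotating the whole knot about an axis in the projection sphere; if two distinct orbits existed, the domains of one orbit's flypes would have to contain every crossing except the ones that orbit creates, contradicting the commutation arrangement with the other orbit. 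Acknowledging this difficulty without resolving it leaves the proof incomplete exactly where the real work lies.
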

We then obtain the following consequences for periodic and free periodic actions, proving Conjecture \ref{conj:main} when the order of the period is odd. 
\begin{corollary}
\label{cor:alt}
If $K$ is a $p$ periodic prime alternating knot for an odd prime $p$, then $K$ has a reduced alternating periodic diagram. 
\end{corollary}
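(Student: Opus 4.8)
The plan is to derive the corollary directly from Theorem \ref{thm:single}. By definition of a periodic knot, the action $\tau$ is (conjugate to) a rotation of $S^3$ whose fixed-point set is an unknotted circle $\gamma$, the axis, disjoint from $K$; this nonempty fixed axis is the structural feature I intend to exploit. Applying Theorem \ref{thm:single} to $\tau$ produces a reduced alternating diagram $D$ together with a flype $f \colon D \to D$ such that $\tau$ is isotopic through maps of pairs to $f$. Since being isotopic through maps of pairs lets me replace $\tau$ by $f$ without loss of generality, the task reduces to promoting the self-flype $f$ to an honest rotational symmetry of a reduced alternating diagram.

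First I would analyze the geometry of the self-flype $f$. A flype is supported near a flyping sphere, and the homeomorphism it realizes is, up to the ambiguity in its support, a rotation; in particular $f$ carries a distinguished axis. Using that $f$ realizes $\tau$ and that $\tau$ has nonempty fixed axis $\gamma$, I would put $\gamma$ into standard position: isotope so that $\gamma$ is the line perpendicular to the projection plane, i.e.\ the center of the diagram together with the point at infinity. Because rotation by $2\pi/p$ about a perpendicular axis preserves the planar projection and sends over-strands to over-strands and under-strands to under-strands, the resulting symmetry is compatible with the diagrammatic structure, and $f$-invariance of $D$ (as a flype from $D$ to $D$) then exhibits $D$, or the diagram obtained from it by performing the flype, as a reduced alternating diagram carried to itself by this rotation. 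This is exactly a reduced alternating periodic diagram. The point separating this from the free periodic alternative is precisely that $\gamma$ is nonempty and can be made perpendicular: a self-flype realizing a free action would have empty fixed-point set and could not be conjugated to a planar rotation fixing the central point.

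I expect the main obstacle to be the passage from the flype, which a priori only rotates a tangle inside a ball while being the identity outside, to a globally defined rotation of the whole diagram. Concretely, one must show that the flype symmetry and the standard rotation about $\gamma$ agree after an \emph{equivariant} isotopy, and that this isotopy can be arranged to respect the alternating and reduced structure rather than merely the underlying homeomorphism type of $(S^3,K)$. Verifying that the flyping sphere can be positioned equivariantly with respect to $\gamma$, so that performing the flype produces a genuinely symmetric reduced alternating diagram, is the technical heart of the argument; once this is in place, preservation of the alternating and reduced conditions under an orientation-preserving planar rotation is routine.
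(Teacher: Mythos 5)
There is a genuine gap at what you yourself call the technical heart, and the route you sketch around it begs the question. After invoking Theorem \ref{thm:single}, the actual content of the corollary is a dichotomy: if the self-flype $f$ is \emph{nontrivial}, then iterating it $p$ times (using Lemma \ref{lemma:permutation}, so that $f^p$ is the identity on crossings) forces the diagram to have the necklace structure of Figure \ref{fig:free}, and the action it realizes is a rotation about the axis perpendicular to the diagram (cyclically permuting the $p$ flype domains) composed with a $\pi$-rotation about an axis lying in the projection sphere. Such a composite of rotations about two linked axes is fixed-point free when both factors are nontrivial, i.e.\ it is a \emph{free} period, contradicting the hypothesis that $\tau$ is a period. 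The in-plane factor has order at most $2$, so since $p$ is odd it must be trivial; hence $f$ is a trivial flype and $D$ itself is already a periodic diagram. Your proposal never establishes that $f$ is trivial; indeed your conclusion that ``$D$, or the diagram obtained from it by performing the flype,'' is periodic suggests you believe a nontrivial self-flype could still yield a periodic diagram, whereas in fact a nontrivial self-flype structure yields precisely a free periodic diagram. Note also that oddness of $p$ must be used a second time here, beyond its use inside Theorem \ref{thm:single}: for $p=2$ the composite of the two rotations can again have fixed points (in $SO(3)$, $R_z(\pi)R_x(\pi)=R_y(\pi)$), which is exactly why this corollary is restricted to odd primes.

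Your proposed mechanism --- isotope the fixed axis $\gamma$ to be perpendicular to the projection plane and then match $f$ with the rotation about $\gamma$ --- assumes what is to be proved: a priori $\gamma$ is just an unknotted circle in $S^3$ with no controlled position relative to the projection sphere, and being able to place it perpendicularly through the diagram is equivalent to the diagram being periodic. Moreover, since $\tau$ and $f$ are related only by isotopy through maps of pairs, you cannot transport the fixed-point set of $\tau$ to $f$ pointwise; the fixed-point property must be used abstractly (periodic versus free periodic as types of finite actions), which is what the paper's dichotomy argument does. Finally, you omit the torus knot case: the paper treats it separately ($K$ must be $T(2,p)$, whose unique reduced alternating diagram is visibly periodic), because the flype analysis behind Lemma \ref{lemma:permutation} relies on hyperbolicity via Mostow rigidity.
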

\begin{corollary}
\label{cor:free}
If $K$ is an odd prime $p$ free periodic alternating hyperbolic knot, then $K$ has a reduced alternating free periodic diagram. See Figure \ref{fig:free}.
\end{corollary}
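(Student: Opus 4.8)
The plan is to feed the free period $\tau$ into Theorem \ref{thm:single} and then read the required diagram off the resulting flype, running parallel to the proof of Corollary \ref{cor:alt} but tracking fixed-point data. Applying Theorem \ref{thm:single} produces a reduced alternating diagram $D$ and a flype $f\colon D\to D$ with $\tau$ isotopic through maps of pairs to $f$. Since $K$ is hyperbolic, Mostow rigidity lets me assume $\tau$ is realized by an isometry of $S^3\setminus K$, so that the conjugacy type of $\tau$ (free versus fixing an axis) is a genuine invariant of the pair; in particular the hypothesis that $\tau$ is \emph{free} is preserved under the isotopy to $f$, and the exceptional $(2,n)$ torus knots are excluded. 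Because the period is an odd prime, the quotient $(S^3,K)/\tau$ is a pair $(L,\overline{K})$ with $L$ a lens space, and $D$ must descend compatibly with the flyping structure.

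First I would analyze $f$ as a homeomorphism of $(S^3,K)$. A flype is built from a flyping sphere $F$ together with the rotation it carries, and its fixed-point set in $S^3$ is controlled by the position of $F$ and of the rotation axis relative to the projection sphere. In the periodic case of Corollary \ref{cor:alt} this axis meets the projection sphere in two points disjoint from $K$, and drawing $D$ symmetrically about those points exhibits the periodic diagram. Here, by contrast, $f\simeq\tau$ has empty fixed-point set; since any odd-order (hence orientation-preserving) finite symmetry of $S^2$ fixes exactly two points, the flyping data cannot be arranged as an honest rotation of the projection sphere. Instead the flype must carry one side of $F$ around to the other, and it is exactly this \emph{rotation composed with a flype} that realizes the free $\Z/p$ action.

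The construction step is then to descend to the quotient and lift back. The flyping sphere $F$ cuts $D$ into a fundamental tangle, and the $p$ translates of this tangle reassemble, via the flype gluing, into a reduced alternating diagram carrying a fixed-point-free $\Z/p$ symmetry; this is precisely the free periodic diagram depicted in Figure \ref{fig:free}. Since the diagram produced is $D$ itself (or a reduced alternating diagram flype-equivalent to it), it is automatically reduced and alternating, which yields the conclusion.

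The hard part will be the second step: showing that the single flype furnished by Theorem \ref{thm:single} is genuinely of \emph{free} type and realizes the free period as a fixed-point-free symmetry of the diagram, rather than collapsing to the fixed-axis picture of Corollary \ref{cor:alt}. Concretely, one must track the fixed-point set of $f$ through the isotopy to $\tau$, use hyperbolicity to guarantee that this set is empty on the nose after an equivariant isotopy, and match the resulting rotation-plus-flype normal form with the template of Figure \ref{fig:free}. Distinguishing the free case from the periodic case inside the uniform conclusion of Theorem \ref{thm:single} is thus the crux of the argument.
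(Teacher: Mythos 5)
Your proposal is correct and follows essentially the same route as the paper: apply Theorem \ref{thm:single} to realize $\tau$ as a single flype $f$ on a reduced alternating diagram, rule out the fixed-axis (trivial flype) case because that would make $\tau$ a period rather than a free period, and then cut the diagram into the domains of the iterates of $f$, which reassemble as the ``permutation of tangles plus in-plane twist'' picture of Figure \ref{fig:free}. Your explicit appeal to Mostow rigidity to transfer freeness across the isotopy is exactly the rigorous form of the step the paper leaves implicit in the sentence ``if $f$ is a trivial flype, then the action is a period of $K$,'' so the extra fixed-point bookkeeping you flag as the crux is already resolved by that one dichotomy.
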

While preparing this paper, the author discovered that Costa and Hongler released \cite{CH}, which contains overlapping results. In particular, Corollary \ref{cor:alt}, one of the main goals of this paper, also appears there. However, Corollary \ref{cor:free} does not appear in \cite{CH}, while their paper also discusses the case of certain 2 periodic actions. Both this paper and \cite{CH} use flypes as a main tool, but differ in their techniques.

Throughout this paper, all knots are prime, alternating, and contained in $S^3$.
\subsection{Organization}
Section \ref{sec:flype} defines the relevant notions of flypes and their equivalences, Section \ref{sec:main} proves the main results, and Section \ref{sec:example} gives a few example applications.
\subsection{Acknowledgments}
The author would like to thank Liam Watson and Joshua Greene for helpful conversations, and Robert Lipshitz for his support and belief in this project.
\section{The Category of Flypes \label{sec:flype}}
In this section we define the category of flypes for a given prime alternating knot $K \subset S^3$, which has objects roughly corresponding to diagrams for $K$, and morphisms generated by flypes. 
\begin{definition}
The \emph{standard crossing ball} $B_{\mbox{\tiny{std}}} = (B^3, D^2, a_1 \cup a_2)$ is the triple of 
\begin{enumerate}
\item the 3-ball $\{(x,y,z) \in \mathbb{R}^3 \mid |(x,y,z)| \leq 1\}$,
\item the horizontal unit disk inside this ball $\{(x,y,z) \in \mathbb{R}^3 \mid z=0 \mbox{ and } |(x,y,z)| \leq 1\}$, and
\item the union of the two arcs $a_1 = \{(x,y,z) \in \mathbb{R}^3 \mid x=0, z \geq 0 \mbox{ and } y^2 + z^2 = 1\}$ and $a_2 = \{(x,y,z) \in \mathbb{R}^3 \mid y=0, z\leq 0 \mbox{ and } x^2 + z^2 = 1\}$.
\end{enumerate}
\end{definition}
\begin{definition}
A \emph{realized diagram} $\lambda(D)$ for a knot $K \subset S^3$ is smooth embeddings
\begin{enumerate}
\item $S^2 \hookrightarrow S^3$, the \emph{projection sphere},
\item $K: S^1 \hookrightarrow S^3$, the \emph{knot}, and
\item $\{B_i\} \hookrightarrow S^3$, the \emph{crossing balls},
\end{enumerate}
such that the $\{B_i\}$ are disjoint and $K \subset S^2 \cup \{B_i\}$, along with homeomorphisms of triples $c_i: (B_i, B_i \cap S^2, B_i \cap K)\to B_{\mbox{\tiny{std}}}$, the \emph{crossing ball identification maps}. The \emph{diagram} $D$ is the labeled graph in $S^2$ which is the projection of $K$ with vertices labeled to reflect under and over crossings.
\end{definition}
\begin{definition}
An \emph{isomorphism} of realized diagrams $f: \lambda(D) \to \lambda(\overline{D})$ is a homeomorphism of pairs $f: (S^3, K) \to (\overline{S}^3, \overline{K})$ such that $f(S^2)$ is isotopic to $\overline{S}^2$ relative to $\overline{K}$, $f(B_i) = \overline{B}_i$ and $\overline{c}_i \circ f= c_i$. 
\end{definition}
It is immediate that if $\lambda(D)$ and $\lambda'(D)$ are realized diagrams for isomorphic labeled graphs $D$, then there is an isomorphism of realized diagrams $f: \lambda(D) \to \lambda'(D)$, and vice versa. 

\begin{definition}
A \emph{standard flype} is a transformation between realized diagrams $\lambda(D) \to \lambda(E)$ of the form shown in Figure \ref{fig:flype}, where both tangles $T_1$ and $T_2$ are required to be non-trivial. That is, a homeomorphism $f:S^3 \to S^3$ which restricts to the identity on a round ball containing $T_2$ (shown as the exterior of $\alpha_2$), $\pi$ rotation around the horizontal axis on a ball containing $T_1$ (shown as the interior of $\alpha_1$) and a linear homotopy between them to get a homeomorphism. We further fix once and for all a homeomorphism $c_{\mbox{\tiny{std}}}: c_1 \to B_{\mbox{\tiny{std}}}$, and require that this be the crossing ball identification map used in $\lambda(D)$ and its 180 degree rotation be the crossing ball identification map used in $\lambda(E)$.
\end{definition}

\begin{definition}
A \emph{flype} $f: \lambda(D) \to \lambda(D')$ is any composition $f = g_1 \circ s \circ g_2$ where $g_1$ and $g_2$ are isomorphisms of realized diagrams, and $s$ is a standard flype. We will refer to the crossing ball in $\lambda(D')$ created by $f$ as $c_f$, and the crossing ball in $\lambda(D)$ removed by $f$ as $c^f$. The ball $\alpha_1$ containing the tangle $T_1$ will be referred to as the \emph{domain} of $f$. We also consider an isomorphism of realized diagrams to be a flype, and refer to it as the \emph{trivial} flype. 
\end{definition}
Now consider a flype $f: \lambda(D) \to \lambda(D')$. Then in the planar graph projection of $D$ we get a distinguished crossing $c^f$ and a distinguished pair of edges $(e^1_f, e^2_f)$ which will cross to form $c_f$. The following lemma states that this is enough to reconstruct the flype. 
\begin{lemma}
\label{lemma:unique}
Let $f: \lambda(D) \to \lambda(D')$ and $g: \lambda'(D) \to \lambda'(D')$ be flypes such that $(e^1_f, e^2_f) = (e_1^g,e_2^g)$ and $c^f = c^g$. Then there exists a pair of isomorphisms of realized diagrams $g_1,g_2$ such that $f = g_1 \circ g \circ g_2$.
\end{lemma}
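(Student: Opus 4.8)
\section*{Proof proposal for Lemma \ref{lemma:unique}}

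The plan is to reduce the statement to a uniqueness property of the \emph{flyping sphere} $\partial\alpha_1$, and to show that the combinatorial data $c^f$ and $(e^1_f,e^2_f)$ determines the domain of the flype up to isotopy. First, since $\lambda(D)$ and $\lambda'(D)$ realize the same labeled graph $D$, the remark following the definition of isomorphism of realized diagrams supplies an isomorphism $\phi\colon\lambda(D)\to\lambda'(D)$, and likewise an isomorphism $\psi\colon\lambda'(D')\to\lambda(D')$. Replacing $g$ by $\psi\circ g\circ\phi$ --- a change of $g$ by exactly the sort of pre- and post-composition that is allowed in the conclusion --- I may assume that $f$ and $g$ are both flypes $\lambda(D)\to\lambda(D')$ between the same two realized diagrams. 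Writing $f=\mu_1\circ s\circ\mu_2$ and $g=\nu_1\circ s'\circ\nu_2$ with $s,s'$ standard flypes and $\mu_i,\nu_i$ isomorphisms of realized diagrams, the task becomes to compare the two domains $\alpha_1^f$ and $\alpha_1^g$ sitting inside $\lambda(D)$, both carrying the same marked crossing $c^f=c^g$ and the same ordered pair of edges.

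The central step is to prove that $\alpha_1^f$ and $\alpha_1^g$ are isotopic through balls meeting $K$ in four points, by an isotopy respecting the crossing balls. Each domain $\alpha_1$ meets the projection sphere in a disk whose boundary is a flyping circle $\gamma\subset S^2$ crossing the knot projection in exactly four points; by the structure of a standard flype two of these lie on the strands running from $c^f$ into the flyped tangle $T_1$, while the other two lie on the edges $e^1_f$ and $e^2_f$ that the rotation brings together to form $c_f$. Thus both $\gamma^f$ and $\gamma^g$ cross the same four arcs of $D$ and separate $c^f$ from $(e^1_f,e^2_f)$ in the same way, the ordered pair recording the direction in which the rotation transports the crossing. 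I would then argue that this crossing pattern determines $\gamma$ up to isotopy rel $D$: the tangle $T_1$ it encloses is precisely the portion of the diagram trapped between $c^f$ and $\{e^1_f,e^2_f\}$, so any two such circles cobound isotopic subdiagrams and can be matched by sliding their four intersection points along the shared edges. This yields an ambient isotopy of $S^2$ rel $D$ taking $\gamma^g$ to $\gamma^f$, which thickens to an automorphism of the realized diagram $\lambda(D)$ carrying $\alpha_1^g$ to $\alpha_1^f$ and $\alpha_2^g$ to $\alpha_2^f$.

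With the domains and their complements matched, I finish by comparing the two standard flypes directly. Two standard flypes with the same domain $\alpha_1$, the same complementary ball $\alpha_2$, and --- by definition of a standard flype --- the same fixed identification $c_{\mbox{\tiny{std}}}$, the same $\pi$ rotation on $\alpha_1$ and the same linear interpolation on the collar between them, agree as maps. The only residual ambiguity is in the crossing ball identification maps $c_i$ recorded by the two realized diagrams, and any such discrepancy is by definition an isomorphism of realized diagrams. Absorbing these, together with the isotopy of the previous step and the isomorphisms $\mu_i,\nu_i$, into a single pair of isomorphisms produces $g_1,g_2$ with $f=g_1\circ g\circ g_2$.

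The step I expect to be the main obstacle is the uniqueness of the flyping circle up to isotopy rel $D$. Although it is intuitively clear that $c^f$ and $(e^1_f,e^2_f)$ trap a single tangle, making this precise requires ruling out that the same four arcs could bound genuinely different flyping regions --- for example a flype over $T_1$ versus one over an enlarged or complementary tangle --- and this is exactly where I expect to need the hypotheses that $T_1$ and $T_2$ are non-trivial and that $K$ is prime.
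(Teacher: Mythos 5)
Your opening reduction to flypes $f,g\colon\lambda(D)\to\lambda(D')$ with common source and target is the same as the paper's first step, and your final step (two standard flypes with the same domain and the fixed identification $c_{\mbox{\tiny{std}}}$ agree up to isomorphism) is fine. The gap is exactly the one you flagged at the end, and it cannot be repaired the way you hope: the data $(c^f,(e^1_f,e^2_f))$ does \emph{not} determine the flyping circle up to isotopy rel $D$, and primeness plus non-triviality of the tangles cannot exclude the ambiguity, because the flype condition is symmetric in $T_1$ and $T_2$. The crossing $c^f$ has four adjacent strands, two running into $T_1$ and two into the complementary tangle $T_2$, and the given data does not record which pair the domain circle crosses. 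Whenever a flype with domain $T_1$ exists, the complementary flype with domain $T_2$ also exists: it removes the same crossing $c^f$, creates its new crossing on the same pair of edges $(e^1_f,e^2_f)$, and its target diagram $T_1 + c' + T_2^{\mathrm{flip}}$ is isomorphic as a labeled graph to $T_1^{\mathrm{flip}} + c' + T_2$ via the $\pi$ rotation of the projection sphere about the flype axis (which preserves crossing signs). So this pair of flypes satisfies every hypothesis of Lemma \ref{lemma:unique}, yet their flyping circles cross $e^1_f,e^2_f$ together with \emph{different} pairs of strands at $c^f$, and are not isotopic rel $D$. The conclusion of your central step --- an ambient isotopy of $S^2$ rel $D$ carrying $\gamma^g$ to $\gamma^f$ --- is therefore false for this pair; the case must be shown \emph{consistent} with the lemma, not ruled out.

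This is why the paper's proof is phrased in terms of the maps rather than their domains. After identifying the two targets by the isomorphism of realized diagrams realizing the graph isomorphism above, the two flypes induce maps on the underlying graph which agree on vertices and are homotopic rel vertices (the flype over $T_1$ rotates the crossings of $T_1$ and fixes those of $T_2$; the flype over $T_2$ followed by the identifying rotation does the same), and they restrict to the same maps on crossing balls since those are pinned down by the crossing ball identification maps; one then extends uniquely over $S^3$ up to isomorphism of realized diagrams. If you want to keep your domain-based argument, you must add the complementary-domain case and prove directly that the two flypes differ by isomorphisms together with the global $\pi$ rotation about the flype axis, which carries one domain onto the complement of the other; your isotopy-uniqueness claim is then only needed for circles crossing the same four edges, which is a far more tractable statement.
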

\begin{proof}
To begin, note that there is an isomorphism between $\lambda(D)$ and $\lambda'(D)$, so that we may consider $f$ and $g$ to start at the same realized diagram. Similarly, there is an isomorphism from $\lambda(D')$ to $\lambda'(D')$, so we may assume $f$ and $g$ end at the same realized diagram. Now note that $f$ and $g$ induce maps on underlying graphs in $S^2$ which are homotopic relative to the vertices of the graph. In particular, $f$ and $g$ restrict to the same map on crossing balls since both are determined by the crossing ball identification maps for $\lambda(D)$ and $\lambda(D')$. From there we have a unique extension to the rest of $S^3$ up to an isomorphism of realized diagrams, as desired.
\end{proof}

\begin{figure}
\scalebox{0.7}{
\begin{tikzpicture}
\begin{knot}[
clip width = 15, 
flip crossing = 1,
]
\strand[black , thick] (8,0) .. controls +(1,0) and +(0,1) .. (9,-1) .. controls +(0,-2) and +(-3,-3) .. (0,0) .. controls +(0,0) and +(-1,0) .. (2,1);
\strand[black , thick] (8,1) .. controls +(1,0) and +(0,-1) .. (9,2) .. controls +(0,2) and +(-3,3) .. (0,1) .. controls +(0,0) and +(-1,0) .. (2,0);
\end{knot}
\draw[black, ultra thick] (2, -.53) -- (2,1.5) -- (4,1.5) -- (4,-.5) -- (2,-.5);
\draw[black, thick] (4,1) -- (6,1);
\draw[black, thick] (4,0) -- (6,0);
\draw[black, ultra thick] (6, -.53) -- (6,1.5) -- (8,1.5) -- (8,-.5) -- (6,-.5);

\draw[black, thick, dotted] (1,.5) .. controls +(0,2.5) and +(0,2.5) .. (4.5,.5);
\draw[black, thick, dotted] (4.5,.5) .. controls +(0,-2.5) and +(0,-2.5) .. (1,.5);

\draw[black, thick, dotted] (0,.5) .. controls +(0,3.5) and +(0,3.5) .. (5.5,.5);
\draw[black, thick, dotted] (5.5,.5) .. controls +(0,-3.5) and +(0,-3.5) .. (0,.5);

\node at (3,.5) {\huge{$T_1$}};
\node at (7,.5) {\huge{$T_2$}};

\node at (4.2, 2){$\alpha_1$};
\node at (4.3, -2){$\alpha_2$};
\node at (0.65,0.8){$c_1$};
\node at (4,-4){\huge{$\lambda(D)$}};
\end{tikzpicture}}
\scalebox{0.7}{
\begin{tikzpicture}
\begin{knot}[
clip width = 15, 
]
\strand[black, thick] (4,1) .. controls +(1,0) and +(-1,0) .. (6,0);
\strand[black, thick] (4,0) .. controls +(1,0) and +(-1,0) .. (6,1);
\end{knot}
\draw[black, ultra thick] (2, -.53) -- (2,1.5) -- (4,1.5) -- (4,-.5) -- (2,-.5);
\draw[black, thick, dotted] (1,.5) .. controls +(0,2.5) and +(0,2.5) .. (4.5,.5);
\draw[black, thick, dotted] (4.5,.5) .. controls +(0,-2.5) and +(0,-2.5) .. (1,.5);

\draw[black, thick, dotted] (0,.5) .. controls +(0,3.5) and +(0,3.5) .. (5.5,.5);
\draw[black, thick, dotted] (5.5,.5) .. controls +(0,-3.5) and +(0,-3.5) .. (0,.5);
\draw[black, ultra thick] (6, -.53) -- (6,1.5) -- (8,1.5) -- (8,-.5) -- (6,-.5);

\draw[black , thick] (8,0) .. controls +(1,0) and +(0,1) .. (9,-1) .. controls +(0,-2) and +(-2,-2) .. (0,-1) .. controls +(1,1) and +(-1,0) .. (2,0);
\draw[black , thick] (8,1) .. controls +(1,0) and +(0,-1) .. (9,2) .. controls +(0,2) and +(-2,2) .. (0,2) .. controls +(1,-1) and +(-1,0) .. (2,1);

\node at (3,.5) {\raisebox{\depth}{\scalebox{1}[-1]{\huge{$T_1$}}}};
\node at (7,.5) {\huge{$T_2$}};

\node at (4.2, 2){$\alpha_1$};
\node at (4.3, -2){$\alpha_2$};
\node at (5,0.9){$c_2$};
\node at (4,-4){\huge{$\lambda(E)$}};
\end{tikzpicture}}
\caption{A standard flype fixes the exterior of $\alpha_2$ and reflects the interior of $\alpha_1$ across the horizontal axis, with a linear homotopy in between. It removes the crossing ball at $c_1$ and creates a crossing ball at $c_2$.}
\label{fig:flype}
\end{figure}
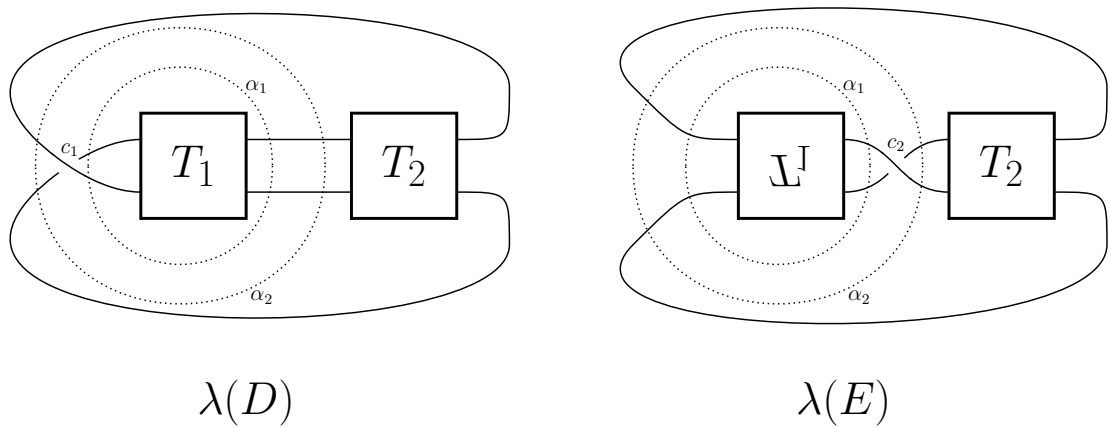
\begin{lemma}
\label{lemma:exists}
Let $\lambda(D)$ and $\lambda(D')$ be realized reduced alternating diagrams for $K$, and let $N(K)$ be a neighborhood of $K$. Then if $f: \lambda(D) \to \lambda(D')$ is a homeomorphism $S^3 \to S^3$ such that $f$ agrees with a flype when restricted to $(N(K)\cap S^2)\cup\{c_i\}$, then $f$ is a flype. That is, if $f$ restricts to a flype on the underlying diagram, then $f$ is a flype.
\end{lemma}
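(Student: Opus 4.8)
The plan is to reduce the statement to a single nontrivial topological assertion — that a certain self-homeomorphism carries the projection sphere back to itself relative to the knot — and then to establish that assertion by an irreducibility argument of exactly the type implicit in the ``unique extension'' step of Lemma~\ref{lemma:unique}.

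First I would fix a flype $h\colon \lambda(D)\to\lambda(D')$ that agrees with $f$ on $(N(K)\cap S^2)\cup\{c_i\}$; such an $h$ exists by hypothesis, and after composing with isomorphisms of realized diagrams (using the remark following the definition of isomorphism, since $f$ and $h$ induce the same labeled graph $D'$) I may assume $h$ has the same source and target realized diagrams as $f$. Set $g := f\circ h^{-1}\colon \lambda(D')\to\lambda(D')$. Since the class of flypes is closed under pre- and post-composition with isomorphisms of realized diagrams — if $h = g_1\circ s\circ g_2$ then $g\circ h = (g\circ g_1)\circ s\circ g_2$, and a composite of isomorphisms is an isomorphism — it suffices to show that $g$ is itself an isomorphism of realized diagrams; then $f = g\circ h$ is a flype.

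Next I would unpack what $g$ does. Because $f$ and $h$ agree on $(N(K)\cap S^2)\cup\{c_i\}$, the map $g$ is the identity on $(N(K')\cap S^2)\cup\{c_i\}$. As $K'\subset (N(K')\cap S^2)\cup\{c_i\}$ (the knot lies in $S^2$ together with the crossing balls), $g$ fixes $K'$ pointwise, so $g$ is a homeomorphism of pairs $(S^3,K')\to(S^3,K')$. Moreover $g$ is the identity on each crossing ball, which gives two of the three conditions in the definition of an isomorphism of realized diagrams for free: $g(B_i)=B_i$ and $c_i\circ g = c_i$. The only remaining condition is that $g(S^2)$ be isotopic to $S^2$ relative to $K'$, so the whole lemma comes down to this one topological fact.

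For that final step I would argue as follows. The spheres $S^2$ and $g(S^2)$ coincide on $N(K')\cap S^2$ and on the crossing balls, so they differ only over the faces of $D'$: for each face $F$, the disks $F$ and $g(F)$ share the boundary $\partial F\subset K'\cup\{c_i\}$ and have interiors disjoint from $K'$ and from the crossing-ball interiors. After making $F$ and $g(F)$ transverse rel boundary, an innermost-disk argument removes the circles of $\mathrm{int}\,F\cap\mathrm{int}\,g(F)$ one at a time: such a circle bounds a disk on each of $F$ and $g(F)$, these cobound a $2$-sphere disjoint from $K'\cup\{c_i\}$, and by irreducibility of $S^3$ this sphere bounds a ball across which we isotope, reducing the number of intersection circles. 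Once the interiors are disjoint, $F\cup g(F)$ is an embedded sphere meeting $K'\cup\{c_i\}$ only along $\partial F$, and sweeping $g(F)$ across the complementary ball lying off $K'\cup\{c_i\}$ gives an isotopy of $g(F)$ to $F$ rel $\partial F$; assembling these face-by-face isotopies (they are compatible since each fixes $\partial F$ throughout) yields an isotopy of $g(S^2)$ to $S^2$ rel $K'$, as required. This last step is where I expect the main obstacle to lie: the substance is the bookkeeping needed to guarantee that every ball used lies in the complement of $K'$ and of the crossing balls, so that the isotopies are genuinely rel $K'$. This is where one invokes irreducibility of the knot exterior (using that $K'$ is a nontrivial prime knot) together with the fact that $K'\cup\{c_i\}$ is connected, hence lies to one side of each cobounding sphere. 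This is precisely the ``unique extension to the rest of $S^3$ up to an isomorphism of realized diagrams'' used in Lemma~\ref{lemma:unique}, now carried out with the projection-sphere datum made explicit.
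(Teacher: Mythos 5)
Your proposal is correct and follows essentially the same route as the paper: the paper likewise fixes a flype $\varphi$ agreeing with $f$ on the crossing balls and on $N(K)\cap S^2$, sets $g = f\circ\varphi^{-1}$, and concludes that $g$ is an isomorphism of realized diagrams so that $f = g\circ\varphi$ is a flype. The only difference is that the paper simply asserts that being the identity on the crossing balls and on $N(K)\cap S^2$ ``determines the relative isotopy class of $g(S^2)$,'' whereas you spell this out with an innermost-disk argument; that is a filling-in of detail, not a different approach.
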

\begin{proof}
Let $\varphi$ be a flype from $\lambda(D) \to \lambda(D')$, so that $f$ and $\varphi$ agree when restricted to both the crossing balls and a neighborhood of $K$ in $S^2$. Now let $g = f \circ \varphi^{-1}:\lambda(D') \to \lambda(D')$, and observe that $g$ is the identity map on each crossing ball, and on $N(K) \cap S^2$. In particular, this determines the relative isotopy class of $g(S^2)'$ so that $g$ is an isomorphism of realized diagram. But then $f = g \circ \varphi$, so $f$ is a flype, as desired.
\end{proof}
By combining Lemmas \ref{lemma:unique} and \ref{lemma:exists}, we see that a diagrammatic description of a flype is sufficient since it corresponds to a unique flype up to isomorphism of realized diagrams.

Now given a composition of two flypes, the following lemmas will allow us to either combine them into a single flype, or else (roughly) commute them past each other. 
\begin{lemma}
\label{lemma:combine}
If $f_1: \lambda(D_1) \to \lambda(D_2)$ and $f_2: \lambda(D_2) \to \lambda(D_3)$ are flypes such that the crossing created by $f_1$ is the crossing removed by $f_2$, and $K$ is prime, then there exists a flype $f_{1,2}: \lambda(D_1) \to \lambda(D_3)$ with $f_2 \circ f_1 = f_{1,2}$.
\end{lemma}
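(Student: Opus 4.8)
The plan is to invoke Lemma~\ref{lemma:exists}: since $f_2\circ f_1$ is a homeomorphism of $S^3$, it suffices to produce a flype $\varphi\colon\lambda(D_1)\to\lambda(D_3)$ that agrees with $f_2\circ f_1$ on $(N(K)\cap S^2)\cup\{c_i\}$, for then $f_2\circ f_1$ is itself a flype. To describe $\varphi$, I work in the middle diagram $D_2$, where the crossing $c:=c_{f_1}=c^{f_2}$ is the one created by $f_1$ and destroyed by $f_2$. Write $A$ for the tangle flyped by $f_1$, so that its flipped image $A'$ is the tangle adjacent to $c$ in $D_2$ on the side from which $f_1$ pushed the crossing, and write $B$ for the tangle flyped by $f_2$, which is adjacent to $c$ on the side across which $f_2$ pushes it.

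First I would pin down the relative position of $A'$ and $B$ using Lemma~\ref{lemma:unique}, which says a flype is determined by its removed crossing together with the pair of edges forming its new crossing. The crossing $c$ has four incident edges: $f_1$ attaches $A'$ along one complementary pair, and the data $(e^1_{f_2},e^2_{f_2})$ for $f_2$ determine $B$. There are two possibilities. Either $f_2$ re-flypes the tangle $A'$, undoing $f_1$, so that $D_3\cong D_1$ and $f_2\circ f_1$ is a trivial flype; or $B$ lies on the side of $c$ opposite $A'$, in which case $A'$ and $B$ are disjoint tangles meeting $c$ along complementary pairs of edges. In the latter case, translating back to $D_1$ shows that $f_2\circ f_1$ rotates the tangle sum $C:=A+B$ by $\pi$ about the common horizontal axis while sliding the crossing from $c^{f_1}$ on one side of $C$ to $c_{f_2}$ on the other. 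The Conway spheres bounding $A'$ and $B$ then merge along $c$ into a single Conway sphere bounding $C$, giving exactly the diagrammatic data of a flype $\varphi$ with removed crossing $c^{f_1}$ and created crossing $c_{f_2}$.

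Next I would check that $\varphi$ is a genuine flype, i.e.\ that both $C$ and its complementary tangle $T_2$ are nontrivial. Nontriviality of $C=A+B$ is clear, since $A$ already carries crossings that cannot cancel inside a reduced alternating diagram. Nontriviality of $T_2$ is exactly where primeness of $K$ enters: were $T_2$ trivial, closing up the flype region would express $K$ as the closure of the tangle sum $(c^{f_1}+A)+B$, and hence, by the standard identity $N(S+S')\cong N(S)\,\#\,N(S')$ for numerator closures of tangle sums, as a connect sum $N(c^{f_1}+A)\,\#\,N(B)$. Both summands are nontrivial, being numerator closures of reduced alternating tangles with at least one crossing, so this contradicts the primeness of $K$. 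Therefore $T_2$ is nontrivial and $\varphi$ is a flype.

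Finally I would confirm that $f_2\circ f_1$ and $\varphi$ agree on $(N(K)\cap S^2)\cup\{c_i\}$: both remove the crossing ball at $c^{f_1}$, create the crossing ball at $c_{f_2}$, and restrict to the same map on every other crossing ball and on $K$ near the projection sphere, so Lemma~\ref{lemma:exists} identifies $f_2\circ f_1$ with a flype. I expect the crux to be the second step---showing that the two flype regions organize into a single flype region for $C$, equivalently that $A'$ and $B$ sit on opposite sides of $c$---together with correctly matching the crossing-ball identification conventions at $c_{f_2}$, where the two successive $\pi$-rotations of $f_1$ and $f_2$ must be reconciled with the single $\pi$-rotation packaged into $\varphi$.
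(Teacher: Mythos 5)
Your opening reduction to diagrammatic data via Lemmas \ref{lemma:unique} and \ref{lemma:exists} is exactly the paper's first step, and your ``opposite sides'' case reproduces the paper's configuration (B), where the composite is a flype on the tangle sum. The gap is your claim that there are only two possibilities for $B$. From the assumption that $B$ attaches to $c$ along the same pair of edges as $A'$ you conclude $B=A'$, so that $f_2$ undoes $f_1$ and the composite is a trivial flype. That implication is false: $B$ can be properly nested inside $A'$ (or properly contain it) while attaching to $c$ along that same pair of edges. Concretely, write the domain of $f_1$ as a tangle sum $A=X+Y$ with $Y$ adjacent to the created crossing; then $f_2$ may legitimately flype only the flipped image of $Y$ back across $c$ (both its domain and its complementary tangle are nontrivial as soon as $X$, $Y$, and the rest of the diagram each carry crossings). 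The composite then flips $X$ once and $Y$ twice, so it is a \emph{nontrivial} flype with domain $X$, and $D_3\not\cong D_1$ in general. This is the paper's configuration (C) in Figure \ref{fig:orientations}; far from being negligible, it is exactly the configuration that the proof of Theorem \ref{thm:single} later has to distinguish from (B), so an argument asserting that this case yields a trivial flype would also break the downstream results.

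Your dichotomy also silently skips the configuration in which $\partial A'$ and $\partial B$ cross, so that the two domains overlap without being nested---the paper's configuration (A). This is precisely where primeness enters the paper's proof: an overlapping configuration forces pieces of the diagram (the $T_3$, $T_4$ of Figure \ref{fig:orientations}) to be attached to the rest of the knot along only two strands, hence to be connect summands, hence trivial because $K$ is prime; only after this can (A) be discarded. You do invoke primeness, but for a different purpose (nontriviality of the complementary tangle of the composite flype), and that argument has its own flaw: the splitting identity holds for \emph{denominator} closures of tangle sums, $D(S+S')\cong D(S)\,\#\,D(S')$, whereas the closure arising in the flype picture is the numerator-type closure, which is typically prime (two rational tangles close up to a $2$-bridge knot, for instance). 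So the proof needs the full four-way analysis---domains disjoint, equal, nested, or overlapping---with primeness used to eliminate the overlapping case; as written, your two-case argument both omits configurations and draws a wrong conclusion in the nested one.
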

\begin{proof}
By Lemmas \ref{lemma:unique} and \ref{lemma:exists}, it is enough to consider diagrammatic flypes. We first note that there are three possible configurations for $f_2$ relative to $f_1$. See Figure \ref{fig:orientations}. Observe, however, that configuration (A) is impossible. Indeed, if $T_3$ or $T_4$ is a non-trivial tangle, then $K$ cannot be prime. On the other hand, in configuration (C), the composition will flip the tangle $T_1$ over twice so that the composition is simply a flype with domain $T_2$. Similarly, in configuration (B), the composition will flip both tangles $T_1$ and $T_2$ over once, which can be realized as a single flype on their sum. 
\end{proof}
\begin{figure}
\scalebox{0.7}{
\begin{tikzpicture}
\begin{knot}[
clip width = 15, 
flip crossing = 1,
]
\strand[black , thick] (8,0) .. controls +(1,0) and +(0,1) .. (9,-1) .. controls +(0,-2) and +(-3,-3) .. (0,0) .. controls +(0,0) and +(-1,0) .. (2,1);
\strand[black , thick] (8,1) .. controls +(1,0) and +(0,-1) .. (9,2) .. controls +(0,2) and +(-3,3) .. (0,1) .. controls +(0,0) and +(-1,0) .. (2,0);
\end{knot}
\draw[black, ultra thick] (2, -.53) -- (2,1.5) -- (4,1.5) -- (4,-.5) -- (2,-.5);
\draw[black, thick] (4,1) -- (6,1);
\draw[black, thick] (4,0) -- (6,0);
\draw[black, ultra thick] (6, -.53) -- (6,1.5) -- (8,1.5) -- (8,-.5) -- (6,-.5);

\draw[black, thick, dotted] (5,.5) .. controls +(0,2.5) and +(0,2.5) .. (8.5,.5);
\draw[black, thick, dotted] (8.5,.5) .. controls +(0,-2.5) and +(0,-2.5) .. (5,.5);

\draw[black, thick, dotted] (.5,.5) .. controls +(0,3) and +(0,3) .. (5,.5);
\draw[black, thick, dotted] (5,.5) .. controls +(0,-3) and +(0,-3) .. (.5,.5);

\node at (3,.5) {\huge{$T_2$}};
\node at (7,.5) {\huge{$T_1$}};

\node at (4, 2){$\alpha_{f_1}$};
\node at (6.3, -1.7){$\alpha_{f_2}$};
\node at (4,-4){\huge{$(B)$}};
\end{tikzpicture}} \hskip -.5 in.
\scalebox{0.7}{
\begin{tikzpicture}
\begin{knot}[
clip width = 15, 
flip crossing = 1,
]
\strand[black , thick] (8,0) .. controls +(1,0) and +(0,1) .. (9,-1) .. controls +(0,-2) and +(-3,-3) .. (0,0) .. controls +(0,0) and +(-1,0) .. (2,1);
\strand[black , thick] (8,1) .. controls +(1,0) and +(0,-1) .. (9,2) .. controls +(0,2) and +(-3,3) .. (0,1) .. controls +(0,0) and +(-1,0) .. (2,0);
\end{knot}
\draw[black, ultra thick] (2, -.53) -- (2,1.5) -- (4,1.5) -- (4,-.5) -- (2,-.5);
\draw[black, thick] (4,1) -- (6,1);
\draw[black, thick] (4,0) -- (6,0);
\draw[black, ultra thick] (6, -.53) -- (6,1.5) -- (8,1.5) -- (8,-.5) -- (6,-.5);

\draw[black, thick, dotted] (5,.5) .. controls +(0,2.5) and +(0,2.5) .. (8.5,.5);
\draw[black, thick, dotted] (8.5,.5) .. controls +(0,-2.5) and +(0,-2.5) .. (5,.5);

\draw[black, thick, dotted] (.5,.5) .. controls +(0,3.5) and +(0,3.5) .. (8.5,.5);
\draw[black, thick, dotted] (8.5,.5) .. controls +(0,-3.5) and +(0,-3.5) .. (.5,.5);

\node at (3,.5) {\huge{$T_2$}};
\node at (7,.5) {\huge{$T_1$}};

\node at (3, 2.5){$\alpha_{f_1}$};
\node at (6, -1){$\alpha_{f_2}$};
\node at (4,-4){\huge{$(C)$}};
\end{tikzpicture}}
\scalebox{0.7}{
\begin{tikzpicture}
\begin{knot}[
clip width = 15, 
flip crossing = 1,
]
\strand[black , thick] (11,0) .. controls +(1,0) and +(0,1) .. (12,-1) .. controls +(0,-2) and +(-3,-3) .. (0,0) .. controls +(0,0) and +(-1,0) .. (2,1);
\strand[black , thick] (11,1) .. controls +(1,0) and +(0,-1) .. (12,2) .. controls +(0,2) and +(-3,3) .. (0,1) .. controls +(0,0) and +(-1,0) .. (2,0);
\end{knot}
\draw[black, ultra thick] (2, -.53) -- (2,1.5) -- (4,1.5) -- (4,-.5) -- (2,-.5);
\draw[black, ultra thick] (5, .5) -- (5,1.5) -- (6,1.5) -- (6,.5) -- (5,.5);
\draw[black, ultra thick] (7, .5) -- (7,1.5) -- (8,1.5) -- (8,.5) -- (7,.5);
\draw[black, thick] (4,1) -- (5,1);
\draw[black, thick] (6,1) -- (7,1);
\draw[black, thick] (8,1) -- (9,1);
\draw[black, thick] (4,0) -- (9,0);
\draw[black, ultra thick] (9, -.53) -- (9,1.5) -- (11,1.5) -- (11,-.5) -- (9,-.5);

\draw[black, thick, dotted] (4.5,1) .. controls +(0,1.2) and +(0,1.2) .. (8.5,1);
\draw[black, thick, dotted] (8.5,1) .. controls +(0,-1.2) and +(0,-1.2) .. (4.5,1);

\draw[black, thick, dotted] (.5,.5) .. controls +(0,3) and +(0,3) .. (6.5,.5);
\draw[black, thick, dotted] (6.5,.5) .. controls +(0,-3) and +(0,-3) .. (.5,.5);

\node at (3,.5) {\huge{$T_2$}};
\node at (10,.5) {\huge{$T_1$}};
\node at (7.5,1){$T_3$};
\node at (5.5,1){$T_4$};

\node at (3, 2.3){$\alpha_{f_1}$};
\node at (7.5, 2){$\alpha_{f_2}$};
\node at (6,-4){\huge{$(A)$}};
\end{tikzpicture}}
\caption{Three potential configurations for the composition of two flypes $f_1:\lambda(D) \to \lambda(D')$ and $f_2:\lambda(D') \to \lambda(D'')$ with $c_{f_1} = c^{f_2}$. The diagrams shown are $D$ (as opposed to $D'$ or $D''$), and the domain $\alpha_{f_2}$ shown for $f_2$ is the preimage of the domain under $f_1$.}
\label{fig:orientations}
\end{figure}
\begin{lemma}
\label{lemma:commute}
If $f_1: \lambda(D_1) \to \lambda(D_2)$ and $f_2: \lambda(D_2) \to \lambda(D_3)$ are flypes such that the crossing created by $f_1$ is not the crossing removed by $f_2$, and $K$ is prime, then there exists a pair of flypes $f_2': \lambda(D_1) \to \lambda(D_2')$ and $f_1': \lambda(D_2') \to \lambda(D_3)$ such that 
\begin{enumerate}
\item $f_2 \circ f_1 = f_1' \circ f_2'$,
\item $f_2(c_{f_1}) = c_{f_1'}$, and
\item $f_1'(c_{f_2'}) = c_{f_2}$.
\end{enumerate}
Furthermore, the domains of $f_1'$ and $f_2'$ are either disjoint or nested (See Figure \ref{fig:flype}).
Informally, we will use this lemma to say that $f_1$ and $f_2$ commute, and by abuse of notation we will refer to $f_1'$ as $f_1$ and to $f_2'$ as $f_2$. 
\end{lemma}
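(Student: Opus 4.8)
The plan is to work entirely at the level of diagrams. By Lemmas \ref{lemma:unique} and \ref{lemma:exists} each flype is determined, up to isomorphism of realized diagrams, by its diagrammatic data, so I may replace $f_1$ and $f_2$ by the diagrammatic flypes they induce. Recall that a flype $f_i$ is supported in its domain: there is a disk $\Delta_{f_i}\subset S^2$ whose boundary is a circle meeting $K$ in four points and cutting $D$ into the two nontrivial tangles $T_1,T_2$ of Figure \ref{fig:flype}, such that $f_i$ rotates the tangle inside $\Delta_{f_i}$ by $\pi$ and is the identity outside a collar of $\Delta_{f_i}$; the removed crossing $c^{f_i}$ sits against $\partial\Delta_{f_i}$ and is transported to the created crossing $c_{f_i}$ on the opposite side. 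The key reduction is this: set $f_1'=f_1$ and $f_2'=f_1^{-1}\circ f_2\circ f_1$ as homeomorphisms of $S^3$. Then $f_1'\circ f_2'=f_2\circ f_1$ formally, $f_2'$ maps $\lambda(D_1)$ to $\lambda(f_1^{-1}(D_3))=:\lambda(D_2')$, and $f_1'$ maps $\lambda(D_2')$ to $\lambda(D_3)$, so condition (1) is automatic. The entire content of the lemma is therefore to show that $f_2'$ is a \emph{genuine} flype, equivalently that the flype disk $\Delta_{f_2}$ can be isotoped, rel $K$, so as to be disjoint from or nested inside $\Delta_{f_1}$; the conclusion on nesting/disjointness of the domains then reads off directly, since $f_1$ carries $f_2'$'s domain $f_1^{-1}(\Delta_{f_2})$ into or off of $\Delta_{f_1}$.

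First I would verify that $f_2'$ really is a flype once $\Delta_{f_2}$ is disjoint from or nested in $\Delta_{f_1}$. In that situation $f_1$, being a rotation supported in $\Delta_{f_1}$, carries the flype disk $\Delta_{f_2}$ to another flype disk, so $f_1^{-1}\circ f_2\circ f_1$ restricts to a flype on the underlying diagram and is a flype by Lemmas \ref{lemma:unique} and \ref{lemma:exists}. Here the hypothesis $c_{f_1}\neq c^{f_2}$ is used twice: it guarantees that $f_1^{-1}(c^{f_2})$ is an honest crossing of $D_1$ (not the crossing $f_1$ created), so $f_2'$ has a crossing to remove, and it guarantees that $c_{f_1}$ lies outside the support of $f_2$, so that $f_2$ fixes $c_{f_1}$ and condition (2), $f_2(c_{f_1})=c_{f_1'}$, holds. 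Condition (3), $f_1'(c_{f_2'})=c_{f_2}$, is then immediate from $c_{f_2'}=f_1^{-1}(c_{f_2})$. The two sub-cases of nesting (which disk contains which) are handled symmetrically by reading the transported crossings off the explicit local rotation picture.

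The hard part, and the step I expect to be the main obstacle, is showing that transverse intersections of the two flype disks cannot persist; this is exactly where primality and the reduced alternating hypothesis enter, in the same spirit as the elimination of configuration $(A)$ in the proof of Lemma \ref{lemma:combine}. I would isotope $\partial\Delta_{f_2}$ to meet $\partial\Delta_{f_1}$ minimally and then take an innermost arc $a$ of $\partial\Delta_{f_2}$ cut off by $\partial\Delta_{f_1}$, bounding a disk $\delta$ together with an arc $b\subset\partial\Delta_{f_1}$ whose interior meets $\partial\Delta_{f_2}$ nowhere. The simple closed curve $a\cup b$ meets $K$ in an even number of points; when this number is $0$ or $2$, primality of $K$ forces the arc of $K$ trapped in $\delta$ to be trivial, so the intersection along $a$ can be removed by an isotopy, contradicting minimality. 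When $a\cup b$ meets $K$ in four points it is itself a Conway circle enclosing a full sub-tangle, and here the nontriviality of the flype tangles together with $c_{f_1}\neq c^{f_2}$ rules out the shared-crossing overlap of Lemma \ref{lemma:combine} and lets the reduced alternating structure push the sub-tangle across $\partial\Delta_{f_1}$, again lowering the intersection number. Iterating removes all transverse intersections, leaving only the disjoint and nested configurations already treated.
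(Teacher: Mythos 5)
Your reduction to diagrammatic flypes via Lemmas \ref{lemma:unique} and \ref{lemma:exists}, and your treatment of the disjoint and nested cases, match the paper. But there is a genuine gap in your handling of the remaining case. You claim that ``transverse intersections of the two flype disks cannot persist,'' i.e.\ that $\Delta_{f_2}$ can always be isotoped rel $K$ to be disjoint from or nested inside $\Delta_{f_1}$. This is false. The essential overlapping configuration is exactly the one in Figure \ref{fig:commute}: the two flype circles cross, their overlap region contains a nontrivial tangle $T_2$, and each of the other three complementary regions ($T_1$, $T_3$, $T_4$) also contains a nontrivial tangle. No isotopy rel $K$ can remove these intersections, so your innermost-arc induction gets stuck precisely at the four-intersection-point case, which is where you wave your hands (``lets the reduced alternating structure push the sub-tangle across $\partial\Delta_{f_1}$''). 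The correct resolution, and the actual content of the paper's proof, is not to isotope the original disks at all: in the overlapping configuration one \emph{replaces} the flypes by different ones, taking $f_2'$ to be the flype with domain $T_3$ and $f_1'$ the flype with domain $T_1$. A direct check shows $f_1'\circ f_2' = f_2\circ f_1$ (each of $T_1$, $T_3$ gets flipped once, $T_2$ twice, and the same crossings are removed and created), and these new domains are disjoint. Note that these $f_1',f_2'$ are genuinely different flypes with smaller domains --- this is exactly what Remark \ref{rmk:flype} is about --- whereas your ansatz $f_1'=f_1$, $f_2'=f_1^{-1}\circ f_2\circ f_1$ can never change the domains and therefore cannot produce the disjoint/nested conclusion in this case.

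A secondary error: you assert that $c_{f_1}\neq c^{f_2}$ ``guarantees that $c_{f_1}$ lies outside the support of $f_2$, so that $f_2$ fixes $c_{f_1}$.'' That is not what condition (2) says and it is not true: in the nested case $\Delta_{f_1}\subset\Delta_{f_2}$ (and also in the overlapping case) the crossing $c_{f_1}$ lies inside the domain of $f_2$ and is carried along by the flype without being removed; condition (2) only requires that the crossing created by $f_1'$ be the image $f_2(c_{f_1})$, not that $f_2$ fix it.
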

As a further abuse of notation, given a crossing ball $c$ in $\lambda(D)$ and a flype $f:\lambda(D) \to \lambda(E)$ which does not remove $c$, we will refer to $f(c)$ as just $c$. 
\begin{remark}
\label{rmk:flype}
Note that while $f'_1(c_{f'_2}) = c_{f_2}$, the crossings created by $f_1$ and $f'_1$ may be different. However since this replacement process only reduces the domains of $f_1,f_2$, a single replacement can be done for an arbitrary composition of flypes after which commuting them does not affect which crossings they create and remove.
\end{remark}
\begin{proof}[Proof of Lemma \ref{lemma:commute}]
Again, by Lemmas \ref{lemma:unique} and \ref{lemma:exists} it is enough to prove this lemma diagrammatically on the underlying graphs. In the case where the domain for $f_1$ is contained in the domain for $f_2$, and the case where the domains for $f_1$ and $f_2$ are disjoint, this lemma is clear by defining $f_1'$ and $f_2'$ to have the same domains as $f_1$ and $f_2$ respectively. On the other hand, suppose that the domains intersect but are not nested. Then we have the configuration shown in Figure \ref{fig:commute}. In this case, define $f_1'$ as the flype with domain $T_1$ and define $f_2'$ as the flype with domain $T_3$, and the result is again clear.  
\end{proof}
\begin{figure}
\scalebox{0.7}{
\begin{tikzpicture}
\begin{knot}[
clip width = 15, 
flip crossing = 1,
]
\strand[black , thick] (14.5,0) .. controls +(1,0) and +(0,1) .. (15.5,-1) .. controls +(0,-2) and +(-3,-3) .. (0,0) .. controls +(0,0) and +(-1,0) .. (2,1);
\strand[black , thick] (14.5,1) .. controls +(1,0) and +(0,-1) .. (15.5,2) .. controls +(0,2) and +(-3,3) .. (0,1) .. controls +(0,0) and +(-1,0) .. (2,0);
\strand[black , thick] (11,1) .. controls +(1,0) and +(-1,-0) .. (12.5,0);
\strand[black , thick] (11,0) .. controls +(1,0) and +(-1,0) .. (12.5,1);
\end{knot}
\draw[black, ultra thick] (2, -.53) -- (2,1.5) -- (4,1.5) -- (4,-.5) -- (2,-.5);
\draw[black, ultra thick] (5.5, -.53) -- (5.5,1.5) -- (7.5,1.5) -- (7.5,-.5) -- (5.5,-.5);
\draw[black, ultra thick] (12.5, -.53) -- (12.5,1.5) -- (14.5,1.5) -- (14.5,-.5) -- (12.5,-.5);
\draw[black, thick] (4,1) -- (5.5,1);
\draw[black, thick] (7.5,1) -- (9,1);
\draw[black, thick] (4,0) -- (5.5,0);
\draw[black, thick] (7.5,0) -- (9,0);
\draw[black, ultra thick] (9, -.53) -- (9,1.5) -- (11,1.5) -- (11,-.5) -- (9,-.5);

\draw[black, thick, dotted] (.5,.5) .. controls +(0,3) and +(0,3) .. (8.25,.5);
\draw[black, thick, dotted] (8.25,.5) .. controls +(0,-3) and +(0,-3) .. (.5,.5);

\draw[black, thick, dotted] (4.75,.5) .. controls +(0,3) and +(0,3) .. (11.75,.5);
\draw[black, thick, dotted] (11.75,.5) .. controls +(0,-3) and +(0,-3) .. (4.75,.5);

\node at (3,.5) {\huge{$T_3$}};
\node at (10,.5) {\huge{$T_1$}};
\node at (6.5,.5) {\huge{$T_2$}};
\node at (13.5,.5) {\huge{$T_4$}};

\node at (3, 2.3){$\alpha_{f_1}$};
\node at (10.5, 2){$\alpha_{f_2}$};
\end{tikzpicture}}
\caption{A possible configuration for two flypes $f_1: \lambda(D) \to \lambda(D')$ and $f_2:\lambda(D') \to \lambda(D)$ which overlap. The shown diagram is $D$, $\alpha_{f_1}$ is the domain for $f_1$, and the domain $\alpha_{f_2}$ shown is the preimage under $f_1$ of the domain for $f_2$.}
\label{fig:commute}
\end{figure}
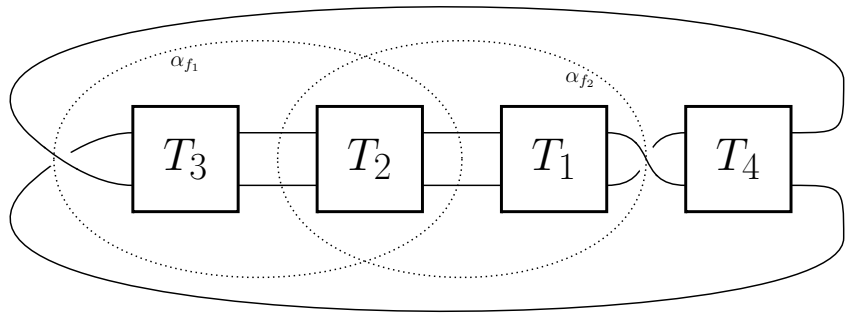

\section{Odd Prime Actions \label{sec:main}}
In this section we will apply the structure of the category of flypes developed in Section \ref{sec:flype} to periodic actions using the following theorem of Menasco and Thistlethwaite.
\begin{theorem}\cite[Main Theorem]{MT}
\label{thm:mt}
For any reduced alternating diagram $D$ for $K$ and realization $\lambda(D)$, any homeomorphism of pairs $(S^3, K) \cong (S^3, K)$ is isotopic through maps of pairs to an isomorphism of realized diagrams which is equal to a composition of flypes from $\lambda(D)$ to $\lambda(D)$.
\end{theorem}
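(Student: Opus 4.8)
The statement is the flyping theorem of Menasco and Thistlethwaite (a form of Tait's flyping conjecture), recast in the language of realized diagrams, so my plan is to reconstruct their argument rather than to cite it. The first move is to convert a self-homeomorphism $f:(S^3,K) \to (S^3,K)$ into a comparison of two projection spheres. Fix the realization $\lambda(D)$ with projection sphere $S^2$ and crossing balls $\{B_i\}$. Then $f^{-1}(S^2)$ is a second projection sphere for $K$, and together with $f^{-1}(\{B_i\})$ it realizes a second reduced alternating diagram $D'$ of the same knot. Since $f$ is, up to isotopy through maps of pairs, determined by where it sends the projection sphere and the crossing balls, the theorem reduces to the purely geometric assertion that any two realized reduced alternating diagrams for $K$ are related by an ambient isotopy of $(S^3,K)$ that is a composition of flypes together with an isomorphism of realized diagrams.

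The core of the proof is the analysis of $S^2 \cap f^{-1}(S^2)$. I would first put the two spheres in general position, transverse to $K$ except where the crossing balls force contact, so that their intersection in the link exterior is a disjoint union of circles. The key external input is Menasco's theorem that the two checkerboard surfaces of a prime alternating diagram are incompressible and meridionally incompressible, together with his standard-position technology controlling how an essential surface meets the projection sphere and the crossing balls. Using this, I would minimize the number of intersection circles by an ambient isotopy relative to $K$. An innermost-disk argument, using that $S^2$ is a sphere and that $K$ is prime, removes every intersection circle bounding a disk that meets $K$ in at most one point; primality of $K$ forbids the separating-sphere configurations that would otherwise obstruct this, which is precisely the phenomenon that rules out configuration (A) in the proof of Lemma \ref{lemma:combine}.

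After minimization, each remaining circle of $S^2 \cap f^{-1}(S^2)$ is essential and, by the alternating and incompressibility constraints, must meet $K$ in exactly four points, i.e.\ it is a Conway sphere cutting off a nontrivial tangle. I would then show that each innermost such Conway sphere exhibits $f^{-1}(S^2)$ as differing from $S^2$ by a single flype across that tangle: the $\pi$-rotation implicit in the way the two projection spheres run through the four-punctured sphere is exactly the standard flype of the definition in Section \ref{sec:flype}, and the nontriviality of both sides of the tangle matches the requirement that $T_1$ and $T_2$ be nontrivial. Peeling off one flype strictly decreases the geometric intersection number, so induction on $|S^2 \cap f^{-1}(S^2)|$ expresses the isotopy from $f^{-1}(S^2)$ to $S^2$ as a finite composition of flypes, with the base case an isomorphism of realized diagrams. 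Lemmas \ref{lemma:combine} and \ref{lemma:commute} then let me reorganize this composition into the asserted form of a single sequence of flypes from $\lambda(D)$ to $\lambda(D)$.

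The main obstacle is the middle step: proving that after minimization the intersection circles are clean Conway spheres and that each one corresponds to an honest flype rather than to some more complicated rearrangement of the two diagrams. This is where the full strength of the normal-surface and standard-position analysis for alternating diagrams is required, including a careful accounting of how intersection circles thread the crossing balls and an exclusion of the configurations in which a circle meets $K$ in more than four points or links several tangles at once. This combinatorial-topological case analysis is the technical heart of the Menasco--Thistlethwaite argument, and everything else in the plan is bookkeeping around it.
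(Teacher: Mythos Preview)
The paper does not prove this theorem. Theorem~\ref{thm:mt} is stated with the attribution \cite[Main Theorem]{MT} and is used as an imported black box; the paper's own contribution begins only after this citation, with Lemma~\ref{lemma:permutation} and the subsequent arguments that manipulate the flype decomposition guaranteed by Menasco--Thistlethwaite. So there is no proof in the paper to compare your proposal against.

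That said, a few comments on your sketch as a reconstruction of the Menasco--Thistlethwaite argument. Your overall architecture---pull back the projection sphere by $f$, put the two spheres in general position, minimize intersection circles via incompressibility of checkerboard surfaces and Menasco's standard-position machinery, then identify each surviving essential circle with a Conway sphere giving a flype---is indeed the shape of their proof. You correctly flag the hard part: the combinatorial case analysis showing that minimized intersection circles are four-punctured and correspond to honest flypes is the genuine content, and your proposal does not supply it, only names it. One inaccuracy: invoking Lemmas~\ref{lemma:combine} and~\ref{lemma:commute} at the end is out of place. Those lemmas are proved in this paper, logically downstream of Theorem~\ref{thm:mt}, and they are not needed to establish the theorem itself---the assertion is only that \emph{some} composition of flypes exists, with no normal form required. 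The reorganization those lemmas perform is part of the paper's later argument toward Theorem~\ref{thm:single}, not part of the flyping theorem.
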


With this theorem in hand, let $K \subset S^3$ be an alternating prime knot with a group action $\Z/p$ for $p$ an odd prime and with a reduced alternating diagram $D$. Let $\tau: S^3 \to S^3$ be a generator for the action so that $\tau^p$ is the identity map. Then by Theorem \ref{thm:mt}, $\tau$ is isotopic to a composition of flypes $\tau \cong f:= f_n \circ f_{n-1} \circ \dots \circ f_1: \lambda(D) \to \lambda(D)$, so that $\tau^p = \mbox{identity} \cong f^p$. In order to keep track of these flypes, we will refer to the $i$th iteration of $f_k$ as $f_{k,i}$ so that $f^p = f_{n,p} \circ f_{n-1,p} \circ \dots \circ f_{2,1} \circ f_{1,1}$. Note that since $f^p$ is an isomorphism $\lambda(D) \to \lambda(D)$, it takes crossing balls to crossing balls and hence induces a permutation $\sigma_{f^p}$ on the set of crossing balls $\{c_i\}$ for $\lambda(D)$. To proceed we need the following lemma.
\begin{lemma}
\label{lemma:permutation}
If $K$ is an alternating knot which is not a torus knot, then as defined above, $\sigma_{f^p}$ is the identity permutation.
\end{lemma}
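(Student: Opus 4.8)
The plan is to reduce the statement to a rigidity property of hyperbolic knots. First I would invoke the classification of prime alternating knots: such a knot that is not a torus knot is hyperbolic, so I may assume $S^3 \setminus K$ admits a complete finite volume hyperbolic structure. Since $\tau^p$ is the identity and $f \cong \tau$ through maps of pairs, we have $f^p \cong \mathrm{id}$, and $f^p$ is an isomorphism of realized diagrams from $\lambda(D)$ to $\lambda(D)$. Thus it suffices to prove the self-contained claim that \emph{any} isomorphism of realized diagrams $h \colon \lambda(D) \to \lambda(D)$ which is isotopic to the identity through maps of pairs induces the trivial permutation $\sigma_h$. I would argue by contradiction: assuming $\sigma_h \neq \mathrm{id}$, I manufacture from $h$ an honest finite order symmetry of $(S^3,K)$ that is isotopic to the identity, which a hyperbolic knot cannot support.

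The key construction passes from $h$ to a periodic model. After isotoping $h(S^2)$ back to $S^2$ relative to $K$, the isomorphism $h$ induces an automorphism $\phi$ of the labeled planar graph $(S^2,D)$; if $\sigma_h \neq \mathrm{id}$ then $\phi$ moves some vertex and so has finite order $m > 1$. A finite order combinatorial automorphism of a diagram in $S^2$ is realized by a finite order homeomorphism of $S^2$ preserving $D$, which extends across the crossing balls using the fixed identification maps with $B_{\mbox{\tiny{std}}}$ and over the complementary balls to a finite order homeomorphism $\rho$ of $(S^3,K)$ of order $m$ inducing exactly $\phi$. By Lemma \ref{lemma:unique} and the uniqueness of the extension of a map of pairs once its behaviour on the diagram and crossing balls is fixed, $\rho$ and $h$ differ only by isomorphisms of realized diagrams inducing the trivial symmetry, and such maps are isotopic to the identity. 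Hence $[\rho] = [h] = \mathrm{id}$ in the mapping class group of $(S^3,K)$.

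Finally I would extract the contradiction from hyperbolic rigidity. The cyclic group $\langle \rho \rangle \cong \Z/m$ acts on the hyperbolic manifold $S^3 \setminus K$, and by geometrization (Mostow--Prasad rigidity together with the resolution of the Smith conjecture) this action is conjugate to one by isometries; let $\rho'$ be the resulting isometry of order $m$, still with $[\rho'] = [\rho] = \mathrm{id}$. Since distinct isometries of a finite volume hyperbolic manifold are never isotopic, $\rho'$ is the identity, forcing $m = 1$ and contradicting $\sigma_h \neq \mathrm{id}$. This is exactly where the torus knot hypothesis enters: the standard diagram of a $(2,n)$ torus knot admits a rotational symmetry that is isotopic to the identity yet permutes crossings, so rigidity genuinely fails and the lemma is false without the hypothesis.

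I expect the main obstacle to be the middle step: realizing the purely combinatorial symmetry $\phi$ by a genuine finite order homeomorphism of the pair and checking that it lies in the same mapping class as $h$. The delicacy is that an isomorphism of realized diagrams controls $h$ only up to isotopy of the projection sphere relative to $K$, so I must argue that the crossing ball identification maps, together with Lemma \ref{lemma:unique}, rigidify enough of $h$ to pin down its mapping class from the combinatorial data of $\phi$ alone.
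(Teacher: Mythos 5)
Your proposal follows essentially the same route as the paper's proof: both replace the realized-diagram isomorphism $f^p$ by a genuine finite-order homeomorphism of $(S^3,K)$ inducing the same combinatorial symmetry of the diagram, invoke the fact that a prime alternating non-torus knot is hyperbolic (no alternating knot is a satellite), and then use Mostow rigidity to conclude that a finite-order map isotopic to the identity through maps of pairs must be the identity, hence the crossing permutation is trivial. The differences are cosmetic --- you frame it as a contradiction and build the finite-order representative from the graph automorphism rather than by homotoping the edges of $D$, and you are somewhat more explicit than the paper about the geometrization input needed to make the finite-order action isometric --- so this counts as the same argument.
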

\begin{proof}
To begin, we claim that $f^p$ is isotopic through maps of pairs $(S^3,K)$ preserving the crossing balls to a finite order map $g$. Clearly the permutation of crossing balls has finite order. We can then perform an homotopy on the edges of the diagram $D$ to get a map isotopic to $f$ but which has finite order when restricted to $K$. This homotopy can then be extended to the projection sphere $S^2$ since its isotopy class relative to $K$ is fixed by $f$, and from there to a map $g$ on all of $S^3$. 

But now, since $f^p$ is isotopic to the identity so is $g$, and so we have a finite order map which is isotopic to the identity. Now if $K$ is an alternating knot, it is not a satellite knot \cite{M}, and so it is either a torus knot or a hyperbolic knot. By assumption $K$ is not a torus knot, so $K$ must be hyperbolic. But by Mostow rigidity, any finite order map on a hyperbolic knot which is isotopic through maps of pairs to the identity is the identity map. Hence $\sigma_{f^p}$ is isotopic through maps preserving the crossing balls to the identity map, and hence $\sigma_{f^p}$ is the identity permutation.
\end{proof}

Now, $(f_{n} \circ \dots \circ f_1)^p$ induces the identity map on each crossing ball of $\lambda(D)$, so each crossing ball created by a flype must later be destroyed by another flype, or else remain in the final diagram. We may then separate the set of flypes into orbits $\{f_{r_1,s_1} \dots f_{r_j,s_j}\}$ such that the crossing ball created by $f_{r_i,s_i}$ is destroyed by $f_{r_{i+1},s_{i+1}}$. That is, $c_{f_{r_i,s_i}} = c^{f_{r_{i+1},s_{i+1}}}$.
\begin{lemma}
\label{lemma:orbit}
There is a reduced alternating diagram $D'$ and a choice of flypes $f_1, \dots f_{m}$ such that $\tau \cong f_{m}' \circ \dots \circ f_1' : \lambda(D') \to \lambda(D')$ and the orbit of the flype $f_{i,1}$ is $\{f_{i,1},f_{i,2}, \dots, f_{i,p}\}$.
\end{lemma}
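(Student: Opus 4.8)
The plan is to start from a Menasco--Thistlethwaite factorization $\tau \cong f = f_n \circ \dots \circ f_1 : \lambda(D) \to \lambda(D)$ provided by Theorem \ref{thm:mt}, and then to repeatedly modify the \emph{choice} of flypes until the orbit decomposition of $f^p$ becomes diagonal. By Lemmas \ref{lemma:unique} and \ref{lemma:exists} I may work entirely with the underlying diagrammatic flypes, and whenever I reorder flypes I will track the created and destroyed crossing balls using the convention of Remark \ref{rmk:flype}, so that after a single initial pass the commutations of Lemma \ref{lemma:commute} no longer change which crossings a flype creates or removes.

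First I would put $f$ into a reduced form in which no flype within a single period $f$ creates a crossing ball that is destroyed later in that same period. Whenever such a pair $f_a, f_b$ with $a < b$ occurs, Lemma \ref{lemma:commute} lets me slide $f_b$ backwards until it is adjacent to $f_a$, since the intervening flypes interact with neither the created crossing nor its domain, and then Lemma \ref{lemma:combine} merges the adjacent create--destroy pair into a single flype. Each such step strictly decreases $n$, so the process terminates, and I may assume that within one period every flype $f_i$ destroys a crossing ball present at the start of the period and creates one that survives to its end.

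Next I would exploit periodicity. The composition carrying the realized diagram from the instant just after $f_{i,k}$ to the instant just after $f_{i,k+1}$ is exactly the conjugate $(f_i \circ \dots \circ f_1) \circ f \circ (f_i \circ \dots \circ f_1)^{-1}$ of $f$; since it is isotopic to a conjugate of $\tau$, it has order $p$, and it fixes the intermediate realized diagram $\lambda(D_i)$ setwise. Combining this with Lemma \ref{lemma:permutation}, which gives $\sigma_{f^p} = \mathrm{id}$, every created crossing ball is genuinely destroyed later, the destruction pattern is invariant under advancing one period, and because $p$ is prime the length of each nontrivial orbit is forced to be exactly $p$. It then remains only to see that the flype destroying $c_{f_{i,k}}$ may be taken to be $f_{i,k+1}$, i.e.\ that the permutation $\rho$ recording which reduced flype destroys the crossing created one period earlier is trivial.

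The main obstacle is precisely this index alignment. The $p$ flypes constituting a single orbit are $\tau$-translates of one another, each obtained from the previous by conjugating by the order-$p$ return map above, so they may legitimately be regarded as the $p$ iterations of one flype; the difficulty is to re-choose the \emph{global} factorization $f_1', \dots, f_m'$ so that every orbit is simultaneously realized in this form while preserving the periodic structure $f^p$. I would resolve this by treating the orbits one at a time: using Lemma \ref{lemma:commute} to move an orbit's flypes into the positions dictated by the symmetry, invoking primality of $p$ to guarantee that an orbit can be neither split nor shortened during the rearrangement, and checking via Remark \ref{rmk:flype} that a single pass of replacements suffices. Once $\rho$ has been trivialized in this way, the orbit of $f_{i,1}$ is $\{f_{i,1}, f_{i,2}, \dots, f_{i,p}\}$, as claimed.
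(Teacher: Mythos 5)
Your first reduction (merging create/destroy pairs that occur within a single iteration of $f$ by commuting with Lemma \ref{lemma:commute} and then combining with Lemma \ref{lemma:combine}) is exactly the first step of the paper's proof, and your periodicity observations (equivariance of the destruction pattern under the period shift, orbits closing up via Lemma \ref{lemma:permutation}) are correct. But the step you yourself flag as ``the main obstacle'' --- a crossing created by $f_{i,j}$ being destroyed by $f_{k,l}$ with $k \neq i$ in a \emph{later} iteration --- is where your argument has a genuine gap. Lemma \ref{lemma:commute} only reorders flypes within the composition; it never changes which iteration a flype belongs to, and Lemma \ref{lemma:combine} can only merge the pair once they are adjacent, which would produce a single flype straddling two iterations and destroy the $p$-th power structure of the word. ``Moving an orbit's flypes into the positions dictated by the symmetry'' and ``invoking primality so an orbit can be neither split nor shortened'' are restatements of the goal, not a mechanism; in the paper primality enters only at the very end, after all cross-index pairs have already been eliminated.

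The missing idea is a change of basepoint. When $c_{f_{i,j}} = c^{f_{k,j+1}}$ with $k \neq i$, the paper first commutes $f_k$ within the word $\tau_D = f_n \circ \dots \circ f_1$ to a suitable position, and then \emph{re-bases}: it takes $D'$ to be the intermediate diagram obtained by applying the initial segment of the word ending in $f_k$ to $\lambda(D)$, so that $\tau_{D'}$ is the cyclic permutation of the word in which $f_i$ and $f_k$ lie in the \emph{same} iteration; now your first reduction applies and the pair merges into one flype, shortening the word. Iterating this change of diagram also handles $l > j+1$, eliminates all cross-index pairs while keeping $\tau$ expressed as a single word repeated $p$ times, and only then does primality give that each remaining flype's orbit consists of all $p$ of its own iterates. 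This re-basing is exactly why the lemma is stated with a new diagram $D'$ rather than the original $D$, a feature your proposal never accounts for. Notably, you do write down the relevant conjugate $(f_i \circ \dots \circ f_1) \circ f \circ (f_i \circ \dots \circ f_1)^{-1}$, but you use it only to extract abstract facts about orders and orbit lengths, rather than as the new factorization based at the intermediate diagram, which is the actual engine of the proof.
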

\begin{proof}
In order to keep track of our diagram, we will refer to $f_n\circ \dots f_1: \lambda(D) \to \lambda(D)$ as $\tau_D$. Additionally, by Remark \ref{rmk:flype}, we may once and for all make a replacement of flypes so that commuting them will not change which crossings they create and remove.

 Now consider a crossing ball $c_{f_{i,j}}$ created by a flype $f_{i,j}$ which does not survive to the final diagram, and let the flype that destroys $c_{f_{i,j}}$ be $f_{k,l}$ with $k \neq i$. 

First, suppose $f_{k,l}$ and $f_{i,j}$ occur in the same iteration of $\tau$ so that $l = j$. Then we can simply commute the flypes in $\tau_D$ by Lemma \ref{lemma:commute} until $f_{i,j}$ and $f_{k,j} = f_{k,l}$ are adjacent, and then combine them into a single flype by Lemma \ref{lemma:combine}. 

Second, suppose that $l = j+1$ so that $f_{k,l}$ occurs in the iteration of $\tau_D$ directly succeeding that of $f_{i,j}$. Then by Lemma \ref{lemma:commute} we can commute $f_k$ in $\tau_D$ until $f_k$ comes just before $f_j$. That is, $\tau_D \cong f_n \circ \dots \circ f_j \circ f_k \circ f_{j-1} \circ \dots \circ f_1$. In this case, consider the realized diagram $\lambda(D')$ obtained by applying $f_k \circ f_{j-1} \circ \dots \circ f_1$ to $\lambda(D)$. We then have $\tau_{D'} =  f_k \circ f_{j-1} \circ \dots \circ f_1 \circ f_n \circ \dots \circ f_j$, so that $f_{i,j}$ and $f_{k,l}$ appear in the same iteration of $\tau_{D'}$ and we can apply the argument above to combine them. 

Finally, if $l > j+1$, then we can iterate the above change of diagram until $f_{i,j}$ and $f_{k,l}$ appear in the same iteration of $\tau_{D''}$ on some diagram $D''$ and then combine them. 

The only flypes remaining will now have orbits only containing their own later iterations, and since $p$ is prime, the orbits are exactly as stated.
\end{proof}

Using this lemma, we now reduce to the case that $\tau$ is given by a single flype and prove Theorem \ref{thm:single}. 
\begin{proof}[Proof of Theorem \ref{thm:single}]
To begin, reduce to the case that $\tau_D$ has flypes with orbits as described in Lemma \ref{lemma:orbit}. That is, the crossing ball created by each flype is only removed by a later iteration of the same flype. Now focus on a particular orbit, that of the flype $f_i$. We know that $c_{f_{i,1}} = c^{f_{i,j}}$ for some $j$, and from the proof of Lemma \ref{lemma:combine} there are two choices for the orientation of the composition: either $f_{i,j}$ continues in the direction of $f_{i,1}$ as in configuration (B) in Figure \ref{fig:orientations}, or else it flips back in the opposite direction of $f_{i,1}$ as in configuration (C) in Figure \ref{fig:orientations}. However, if it is the opposite direction, then since both of these flypes are $f_1$ they have the same tangle as their domains and so $f_{i,j}$ will exactly undo $f_{i,1}$. In other words, the orbit will consist of exactly the pair $\{f_{i,1}, f_{i,j}\}$. However, since $p \neq 2$ this is impossible, so $f_{i,j}$ must continue in the same direction as $f_{i,1}$. Hence each orbit of flypes will form a loop, and have the effect of rotating $K$ around an axis contained in the projection sphere.

Now consider the orbits for $f_1$ and $f_2$. Note that the replacement via Lemma \ref{lemma:commute} of $f_1$ and $f_2$ with $f_1'$ and $f_2'$ only reduces the domains of $f_1$ and $f_2$, and hence we may make a replacement which ensures that $f_{1,i}$ and $f_{2,j}$ commute for all $i,j$. In particular $c_{f_{1,i}}$ is not contained in the domain of any $f_{2,j}$. However, all crossings except for the $\{c_{f_{2,j}}\}$ must be contained in the domain of some $f_{1,i}$ since the collection of these flypes forms a loop which rotates the entire knot. This is a contradiction, so that in fact $\tau_D$ can be written as a single flype.
\end{proof}
We now return to the case that $K$ is $p$-periodic (as opposed to free periodic). Then iterating a single flype cannot rotate the knot around an axis in the diagram as well as an axis perpendicular to the diagram since that would be a free period and so we obtain a proof of Corollary \ref{cor:alt}.
\begin{proof}[Proof of Corollary \ref{cor:alt}]
If $K$ is a torus knot, then it is $T(2,p)$ which has a unique reduced alternating diagram, and it is $p$ periodic. If $K$ is a hyperbolic knot, then by Theorem \ref{thm:single} the periodic action is generated by a single flype $f$ on some reduced alternating diagram, and by Lemma \ref{lemma:permutation} the $p$th power of this flype is the identity map on the crossings. Then we can cut $K$ into the $p$ domains of the $f_i$, and describe $\tau$ as rotation around an axis permuting these domains, plus a rotation around an axis in the plane of the diagram. However, this describes a free periodic action unless one of these rotations is trivial. Note that the rotation around an axis in the plane of the diagram needs to be either 2-periodic or trivial, and since $p$ is odd, it is trivial. Hence $f$ is a trivial flype, and just an isomorphism of realized diagrams. In particular, $\tau$ is just a rotational symmetry of the diagram, as desired. 
\end{proof}
On the other hand, when $K$ has a $\Z/p$-action which is a free period, we get Corollary \ref{cor:free}.
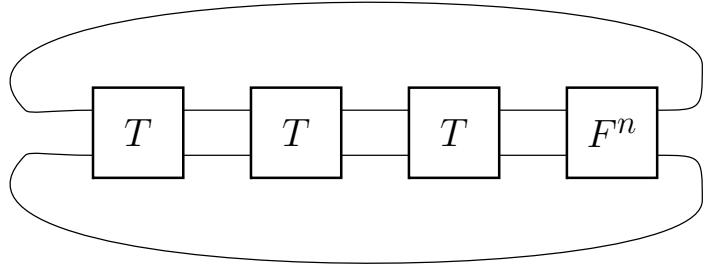
\begin{figure}
\scalebox{0.6}{
\begin{tikzpicture}
\begin{knot}[
clip width = 15, 
flip crossing = 1,
]
\strand[black , thick] (14.5,0) .. controls +(1,0) and +(0,1) .. (15.5,-1) .. controls +(0,-2) and +(-3,-3) .. (.5,0) .. controls +(.1,.1) and +(-1,0) .. (2,0);
\strand[black , thick] (14.5,1) .. controls +(1,0) and +(0,-1) .. (15.5,2) .. controls +(0,2) and +(-3,3) .. (.5,1) .. controls +(.1,-.1) and +(-1,0) .. (2,1);
\end{knot}
\draw[black, ultra thick] (2, -.53) -- (2,1.5) -- (4,1.5) -- (4,-.5) -- (2,-.5);
\draw[black, ultra thick] (5.5, -.53) -- (5.5,1.5) -- (7.5,1.5) -- (7.5,-.5) -- (5.5,-.5);
\draw[black, ultra thick] (12.5, -.53) -- (12.5,1.5) -- (14.5,1.5) -- (14.5,-.5) -- (12.5,-.5);
\draw[black, thick] (4,1) -- (5.5,1);
\draw[black, thick] (7.5,1) -- (9,1);
\draw[black, thick] (4,0) -- (5.5,0);
\draw[black, thick] (7.5,0) -- (9,0);
\draw[black, thick] (11,1) -- (12.5,1);
\draw[black, thick] (11,0) -- (12.5,0);
\draw[black, ultra thick] (9, -.53) -- (9,1.5) -- (11,1.5) -- (11,-.5) -- (9,-.5);

\node at (3,.5) {\huge{$T$}};
\node at (10,.5) {\huge{$T$}};
\node at (6.5,.5) {\huge{$T$}};
\node at (13.5,.5) {\huge{$F^n$}};

\end{tikzpicture}}
\caption{A $(p=3,n)$ free periodic alternating diagram. $T$ is any alternating tangle, and $F^n$ is $n$ full twists.}
\label{fig:free}
\end{figure}
\begin{proof}[Proof of Corollary \ref{cor:free}]
Again by Theorem \ref{thm:single} the free periodic action is generated by a single flype $f$ on some reduced alternating diagram. If $f$ is a trivial flype, then the the action is a period of the knot $K$, so $f$ must be a non-trivial flype. As above, we can then cut $K$ into the domains of the $f_i$, and describe $\tau$ as a permutation of these domains with a twist around an axis in the plane of the diagram in between. In particular, the diagram must already be a free periodic diagram, see Figure \ref{fig:free}.
\end{proof}
Finally, we use Corollary \ref{cor:alt} to consider the quotient of an alternating periodic knot. 
\begin{corollary}
Let $K$ be an odd-periodic alternating prime knot. Then the quotient knot of $K$ is alternating. 
\end{corollary}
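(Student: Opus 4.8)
The plan is to reduce the statement to a periodic alternating diagram via Corollary \ref{cor:alt} and then verify that passing to the quotient of such a diagram preserves the alternating condition. First I would invoke Corollary \ref{cor:alt}: for the odd prime period $p$ it provides a reduced alternating diagram $D$ for $K$ that is invariant under the order $p$ rotation $\rho$ generating the periodic action, whose axis meets the projection sphere $S^2$ in two points disjoint from $K$. This converts the problem from an abstract statement about quotient knots into a concrete statement about the quotient of a symmetric diagram.

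Next I would pin down the quotient data so that the quotient knot really sits in $S^3$ with a well-defined diagram. Because $\rho$ is a finite cyclic rotation of $S^3$, the resolution of the Smith conjecture forces its axis to be unknotted, so $S^3/\rho \cong S^3$; moreover $\rho$ restricts to a rotation of $S^2$ about its two fixed points, whence $S^2/\rho \cong S^2$. The image $\overline{D}$ of $D$ under the quotient map is therefore an honest knot diagram for the quotient knot $\overline{K} \subset S^3$ drawn on the quotient projection sphere. Since $K$ is disjoint from the axis, the quotient map is an unbranched covering in a neighborhood of $K$, hence a local homeomorphism near every crossing, carrying the over/under data of $D$ to that of $\overline{D}$ unchanged. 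The crossings of $\overline{D}$ are exactly the $\rho$-orbits of crossings of $D$, each of size $p$ because $\rho$ acts freely on $K$ and $p$ is prime, so $\overline{D}$ has $c(D)/p$ crossings.

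The heart of the argument is showing $\overline{D}$ is alternating. Here I would use that $\rho$ acts freely on the circle $K$, so $K \to \overline{K}$ is a $p$-fold cyclic cover and a single fundamental arc of $K$ maps bijectively onto $\overline{K}$. Traversing $\overline{K}$ once is then the image of traversing this fundamental arc of $K$, along which the crossing encounters alternate over and under because $D$ is alternating; since the quotient map preserves over/under data near $K$, the encounters along $\overline{K}$ alternate as well. The pattern closes up consistently because $\rho$ is a symmetry of the entire diagram and the number of crossing encounters along $\overline{K}$ is $2c(D)/p$, which is even. Hence $\overline{D}$ is an alternating diagram for $\overline{K}$, so $\overline{K}$ is alternating.

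The main obstacle I expect is making the descent of the alternating condition rigorous rather than merely pictorial: one must simultaneously confirm that $\overline{D}$ is a genuine diagram in $S^3$ (via the Smith conjecture and the quotient of the projection sphere), that over/under data is transported faithfully (via freeness of the action on $K$ and the disjointness of $K$ from the branch axis), and that the alternation along the fundamental arc closes up around $\overline{K}$ with no parity defect. Once these three points are verified the conclusion follows immediately from Corollary \ref{cor:alt}.
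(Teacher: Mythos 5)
Your proposal is correct and follows essentially the same route as the paper: invoke Corollary \ref{cor:alt} to obtain a reduced alternating periodic diagram, then observe that the over/under data descends to the quotient diagram so that alternation is preserved. The paper phrases the quotient as cutting out a fundamental domain and joining free ends, while you describe it as the image under the quotient map (adding the Smith conjecture and parity bookkeeping to make the descent rigorous), but these are the same argument at different levels of detail.
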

\begin{proof}
By Corollary \ref{cor:alt}, we can find a reduced alternating periodic diagram $D$ for $K$. Then the quotient knot is obtained by cutting a fundamental domain of the rotation from $D$ and connecting the free ends without crossings. In particular, a strand leaving an under crossing and going to an over crossing still does so.
\end{proof}
\section{Examples \label{sec:example}}
Corollaries \ref{cor:alt} and \ref{cor:free} give an elementary method to determine if an alternating prime knot $K$ has a $p$ period or a free $p$ period. Since all reduced alternating diagrams are related by flypes, you can simply list all reduced alternating diagrams for $K$ and check what symmetries they have.
\begin{example}
Consider $4_1$, the figure eight knot. See Figure \ref{fig:4}, which shows a reduced alternating diagram. It has no possible non-trivial flypes, and it is not a periodic or free periodic diagram, so $4_1$ is not $p$ periodic or free periodic for any odd prime $p$, and hence for any odd integer $p$. 
\end{example}
\begin{figure}
\scalebox{0.5}{
\includegraphics[width=0.5\textwidth]{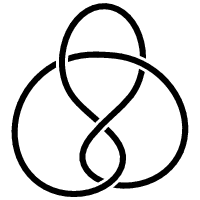}}
\caption{The unique reduced alternating diagram for $4_1$.}
\label{fig:4}
\end{figure}
More generally, the number of crossings in a reduced alternating diagram for a knot obstructs the existence of periods, although the existence of the full twists in a free periodic diagram means there is no such obstruction for free periods.
\begin{example}
Suppose $K$ is $p$ periodic for an odd prime $p$, and has a reduced alternating diagram with $n$ crossings. Then since any other reduced alternating diagram is related by a sequence of flypes which does not change the crossing number, $K$ must have a $p$ periodic diagram with $n$ crossings by Corollary \ref{cor:alt}. In particular, $n$ must be a multiple of $p$. 
\end{example}
\bibliography{bibliographyA}{}
\bibliographystyle{alpha}

\end{document}